%Oct.19,2020- submission to APNUM
\documentclass[oneside,11pt]{article}
\usepackage{epsfig, amsfonts, amsmath, amsthm,amsbsy}
\usepackage{color}
\usepackage{subfigure}
\newtheorem{theorem}{Theorem}

\newtheorem{remark}{Remark}
\newtheorem{example}{Example}

\def\ve{\varepsilon}
\def\vr{\varepsilon}
\def\ds{\displaystyle}
 \DeclareMathOperator\erfc{erfc}

 \begin{document}

 \title{\bf Parameter-uniform approximations for a singularly perturbed convection-diffusion problem with a  discontinuous initial condition\thanks{This research was partially supported  by the Institute of Mathematics and Applications (IUMA), the projects PID2019-105979GB-I00 and PGC2018-094341-B-I00 and the Diputaci\'on General de Arag\'on (E24-17R).}}
\author{J.L. Gracia\thanks{Department of Applied Mathematics, University of
Zaragoza, Spain.\  email: jlgracia@unizar.es} \and E.\ O'Riordan\thanks{School of Mathematical Sciences, Dublin City
University, Dublin 9, Ireland.\ email: eugene.oriordan@dcu.ie}}

\maketitle

\begin{abstract}
A singularly perturbed parabolic problem of convection-diffusion type with a discontinuous initial condition is examined.
A particular complimentary error function is identified which matches the discontinuity in the initial condition. The difference between this analytical function and the solution of the parabolic problem is approximated numerically.
A co-ordinate  transformation is used so that a layer-adapted mesh can be aligned to the interior layer present in the solution.    Numerical analysis is presented for the associated numerical method, which establishes that the numerical method is a parameter-uniform numerical method. Numerical results are presented to illustrate the pointwise error bounds  established in the paper.
\end{abstract}

\noindent {\bf Keywords:} Convection diffusion, discontinuous initial condition.

\noindent {\bf AMS subject classifications:} 65M15, 65M12, 65M06

%\newpageafter{abstract}
%
\section{Introduction}

In this paper, we examine a singularly perturbed convection-diffusion problem with a discontinuous initial condition of the form: Find $\hat u$ such that
\begin{subequations}\label{problem class}
\begin{eqnarray}
-\ve \hat u_{ss} + \hat a \hat u_s +\hat b \hat u+\hat u_t = \hat f, \quad (s,t) \in  \hat Q:= (0,1) \times (0,T]; \label{pde}\\
  \hat u(s,0) = \phi (s) \not \in C^0(0,1); \quad \hat a >0; \ \hat b\geq 0,
\end{eqnarray}
\end{subequations}
 with Dirichlet boundary conditions. As this is a parabolic problem, an interior layer emerges from the initial  discontinuity, which is  diffused over time if $\ve =O(1)$.  However, when the parameter is small, the interior layer is convected along a characteristic curve associated with the reduced problem.

In  \cite{apnum}, we examined a related singularly perturbed reaction-diffusion problem (set  $\hat a\equiv 0$ in (\ref{problem class})) with a discontinuous initial condition  and we used an idea from \cite{Flyer} to first identify  an analytical function which matched the discontinuity in the initial condition and also satisfied a constant coefficient version of the differential equation. A numerical method was then constructed to approximate the difference between the solution of the singularly perturbed reaction-diffusion problem and this analytical function. The numerical approximation involves approximating an  interior layer function whose location, in the case of a reaction-diffusion problem, is fixed in time. In the corresponding convection-diffusion problem, the location of the interior layer function moves in time and, from \cite{disc-cd1}, we know that  the numerical method needs to track this location.  Shishkin \cite{shishkin4} examined problem (\ref{problem class}) in the case where  the initial condition $\phi \in C^0(0,1) \setminus C^1(0,1)$. In \cite[Chapter 10 and \S 14.2]{Shish-redbook}, Shishkin and Shishkina discuss the {\it method of additive splitting of singularities} for singularly perturbed problems with non-smooth data. We follow the same philosophy here.

When the convective coefficient  depends solely on time ($\hat a(s,t) \equiv \hat a(t) >0$), the main singularity generated by the discontinuous initial condition can be explicitly identified by a particular complimentary  error function. This error function tracks the location of the interior layer emanating from the discontinuity in the initial condition  and it also satisfies the homogenous partial differential equation ({\ref{pde}) exactly. When this discontinuous error function is subtracted from the solution $\hat u$ of (\ref{problem class}), the remaining function (denoted below by $\hat y$)  contains no interior layer and it can be adequately approximated numerically  by designing a numerical method which incorporates a Shishkin mesh in the vicinity of  the boundary layer \cite{arxiv}.

In this paper we deal with the more general case of the convective coefficient depending on both space and time.
In this case, the situation is more complicated. The main singularity  is again  a particular complimentary error function which tracks the location of the interior layer, but when the coefficient $\hat a$ in ({\ref{pde}) varies in space
this complimentary error function  does not satisfy the homogenous partial differential equation  ({\ref{pde}). Moreover, when this discontinuous error function is subtracted from the solution $\hat u$ of (\ref{problem class}), the remaining function $\hat y(s,t)$ contains its own  interior layer. To generate an accurate numerical approximation to this  remainder $\hat y$, a  coordinate transformation is first required in order  that a mesh can be constructed to track the location of this internal layer.  Hence the numerical method used to approximate the  remainder  (when $\hat a$ depends on space  and time) is different to the numerical method used to approximate the  remainder  in the case of the convective coefficient solely depending on time. Needless to say, the more general method can also be applied to the case where the convective coefficient $\hat a$ is independent of space.
If the coordinate transformation is not used, in the numerical section we demonstrate  that one does not generate a parameter-uniform  approximation if  $\hat a$ depends on the space variable.

In  \S \ref{sec:continuous} we specify the continuous problem and deduce bounds on the partial derivatives of the solution. Some of the more technical details involved in the proofs of the bounds on the continuous solution are presented in the appendices. A piecewise-uniform mesh is constructed in \S \ref{sec:Method+Error}, which is designed to be refined in the neighbourhood of the curve $\Gamma ^*$, which identifies the location of the interior layer at each time. To analyse the parameter-uniform convergence of the resulting numerical approximations on such a mesh, it is more convenient to perform the analysis in a transformed domain where the location of the interior layer is fixed in time. To simplify the discussion of the method and the associated numerical analysis,  we discuss the case where there is no source term present in the problem in \S \ref{sec:continuous} and  \S \ref{sec:Method+Error}.  In \S \ref{source-term}, we outline the modifications required when  a source term is present.  In \S \ref{sec:numerical}, we present some numerical results to illustrate the performance of the method.

{\bf Notation:} Throughout the paper, $C$ denotes a generic constant that is independent of the singular perturbation parameter $\vr$ and all the discretization parameters. The $L_\infty$ norm on the domain $D$ will be denoted by $\Vert \cdot \Vert_D$  and the subscript is omitted if $D=\hat Q.$ We also define the jump of a function at a point $d$ by$[\phi ] (d) := \phi (d^+) - \phi (d^-) $. Functions defined in the computational domain will be denoted by $f(x,t)$ and functions defined in the untransformed domain will be denoted by $\hat f(s,t)$.

\section{Continuous problem} \label{sec:continuous}

Consider the following convection-diffusion problem\footnote{As in \cite{friedman}, we define the space ${\mathcal C}^{0+\gamma}(D )$, where $D \subset \mathbf{R}^2$ is an open set, as the set of all functions that are H\"{o}lder continuous of degree $\gamma \in (0,1) $ with respect to the metric $\Vert \cdot \Vert, $
where for all ${\bf p}_i=(x_i,t_i),  \in \mathbf{R}^2, i=1,2; \
\Vert {\bf p}_1- {\bf p}_2 \Vert^2  = (x_1-x_2)^2 + \vert t_1 -t_2 \vert$.
For $f$ to be in ${\mathcal C}^{0+\gamma}(D ) $ the following semi-norm needs to be finite
\[
\lceil f \rceil _{0+\gamma , D} := \sup _{{\bf p}_1
 \neq {\bf p}_2, \ {\bf p}_1, {\bf p}_2\in D}
\frac{\vert f({\bf p}_1) - f({\bf p}_2) \vert}{\Vert {\bf p}_1- {\bf p}_2 \Vert ^\gamma} .
\] The space ${\mathcal C}^{n+ \gamma}(D ) $ is defined by
\[
{\mathcal C}^{n+\gamma }( D ) := \left \{ z : \frac{\partial ^{i+j} z}{
\partial x^i
\partial t^j } \in {\mathcal C}^{0+\gamma }(D), \ 0 \leq i+2j \leq n \right \},
\]
and $\Vert \cdot \Vert _{n + \gamma}, \ \lceil \cdot \rceil _{n+\gamma}$ are the associated norms and semi-norms.
}: Find $\hat u$ such that
\begin{subequations}\label{problem3a}
\begin{align}
& \hat L \hat u:= -\ve \hat{u}_{ss} + \hat a(s,t) \hat{u}_s +\hat{u}_t= \hat f, \quad (s,t) \in \hat Q:=(0,1)\times (0,T], \label{a(x,t)}\\
&\hat{u}(s,0)=\phi (s), \, 0 \leq s \leq 1;  \   [\phi ](d)\neq 0,\ 0 < d =O(1) <1; \label{1b} \\
&\hat{u}(p,t)=0,  \ 0 < t \leq T, \ p=0,1;\label{1c}\\
&\hat a(s,t) > \alpha >0, \ \forall (s,t) \in \hat Q, \quad \hat a, \, \hat f  \in C^{4+\gamma } (\bar{\hat Q} ); \label{1d}\\
&\phi ^{(2i)} (p) =0;\  i=0,1,2; \ p=0,1; \ \phi \in C^4((0,1); \setminus \{d \});\label{comp0}\\
& \hat f^{(i+2j) }(p,0)=0; \ 0 \leq i+2j \leq 4-2p, \  p=0,1; \label{comp1}\\
& \hat a_s(d,0) =0, \quad [\phi ' ](d)=  0. \label{simplify}
\end{align}
 In general,  a moving interior layer and a boundary layer  will  appear in the solution.   When the convective term depends on space then the path of the characteristic curve $\hat \Gamma ^*$ (associated with the reduced problem) is implicitly defined by
\begin{equation} \label{characteristic}
 \hat \Gamma^*:= \{ (d(t),t) \vert d'(t)=\hat a(d(t),t), \quad d(0)=d \}.
\end{equation}
Since we have assumed that $\hat a>0$, the function $d(t)$ is monotonically increasing. We restrict the size of the final time $T$ so that the interior layer does not interact with the boundary layer. Thus,  we limit the final time $T$ \footnote{In \cite{arxiv} we examine the effect of not restricting the final time $T$.} such that
\begin{equation}\label{delta}
 1 > \delta := \frac{1-d(T)}{ 1-d} >0.
\end{equation}
 In the error analysis, we are required to impose a further restriction on the final time by assuming that
\begin{equation}\label{T-limit}
 \frac{2T}{\delta} \Vert \hat a_s  \Vert  \leq 1-\gamma , \ 0 < \gamma < 1.
\end{equation}
\end{subequations}

The discontinuity in the initial condition generates an interior layer emanating from the point $(d.0)$. By identifying the leading term $ 0.5 [\phi ](d)\hat \psi _0 $ in an asymptotic expansion of the solution, we  can  define the continuous function
\begin{equation}\label{def-y}
\hat y (s,t) := \hat u(s,t)   - 0.5 [\phi ](d)\hat \psi _0 (s,t), \  \hat \psi _0(s,t) := \erfc \left ( \frac{d(t)-s}{2\sqrt{\ve t}} \right),
\end{equation}
where
\begin{eqnarray}\label{untransformed}
\hat L\hat y&=  \hat f + 0.5 [\phi ](d) \bigl(\hat a(d(t),t)- \hat a(s,t)\bigr) \frac{\partial}{\partial s}\hat \psi_0(s,t).
\end{eqnarray}

Note that in (\ref{simplify}) we impose the constraint  $[\phi ' ](d)=  0$ on the initial condition. This assumption permits us to complete the  analysis of the numerical error.
Based on the expansion (\ref{expansion2}) of the solution derived in the appendix, we note that $\hat \psi _i\in C^{i-1}(\bar {\hat  Q}) , i\ge 1,$ which implies (due to assumption  (\ref{simplify})) that $\hat y (s,t) \in C^1(\bar {\hat  Q})$. Moreover, if the constraint $[\phi ' ](d)=  0$ is not imposed, then there is a reduction in the order of convergence of the numerical approximations as in  \cite[Theorem 1]{arxiv},  \cite{shishkin4};  and the error analysis remains an open question when $[\phi ' ](d) \neq  0$.

 In addition, in (\ref{simplify}) we also assume that $\hat a_s(d,0) =0$, which results in the interior layer function  (defined in~\eqref{decomposition_y}) being sufficiently regular to establish the bounds  (\ref{bds-final}).
The constraint (\ref{T-limit})  is used in establishing the pointwise bound (\ref{bds-z-a}) on the interior layer function. This bound  is used to determine  the transition points in the Shishkin mesh around the interior layer.
Finally, for sufficiently smooth and  compatible boundary conditions at $(0,0)$ and $(1,0)$, there is no loss in generality in assuming the constraints  (\ref{1c}), as the simple subtraction of the linear function $\hat q(s,t):=\hat u(0,t) (1-s) +\hat u(1,t) s$ from $\hat u$ leads us to problem (\ref{problem3a}) with $\hat f$ replaced by  $\hat f_1:=\hat f- \hat L \hat q$.

Observe that the inhomogeneous term in (\ref{untransformed}) is continuous, but not in $ C^1(\bar {\hat  Q})$ on the closed domain.
The presence of this inhomogeneous term  will induce an interior layer into the function $\hat y$.
So if the convective coefficient $\hat a(s,t)$ depends on the space variable, we are required to transform the problem (\ref{problem3a}) so that  the curve $ \hat \Gamma ^*$ is transformed to a straight line, around which a piecewise-uniform Shishkin mesh is constructed.

One possible choice \cite{disc-cd1} for the transformation  $X:(s,t) \rightarrow (x,t)$ is the piecewise linear map given by
\begin{equation}\label{map}
x(s,t):= \left\{ \begin{array}{ll}
\displaystyle
\frac{d}{d(t)}s, &\quad s \leq d(t), \\[2ex]
\displaystyle
1- \frac{1-d}{1-d(t)}(1-s), & \quad s \geq d(t), \end{array}
\right.
\end{equation}
which means that $\hat a(d(t),t) =a(d,t)$.  Define the left and right subdomains to be
\[
Q^-:=(0,d) \times (0,T] \quad \hbox{and} \quad Q^+:=(d,1) \times (0,T].
\]
Using this map the problem to solve numerically, transforms into the problem: Find $y$ such that
\begin{subequations}\label{problem3}
\begin{align}\label{de-problem3}
& {\cal{L}}y
= g \left( f + 0.5 [\phi ](d)  \frac{(a(d,t)- a(x,t))}{\sqrt{\ve \pi t}} e^{-\frac{g(x,t)(x-d)^2}{4\ve t}}\right), \quad x \neq d,\\
& [y \ ](d,t)  =0, \quad \ \left [\frac{1}{\sqrt{g}}y _x\right](d,t) =0,\\
& y(p,t)  =  - 0.5 [\phi ](d)\hat \psi _0 (p,t), \  p=0,1, \ 0 < t \leq T, \\
& y(x,0)  = \begin{cases}
\displaystyle
\phi (x) , & x < d, \\[1ex]
\displaystyle
\phi (d^-) , & x = d, \\[1ex]
\displaystyle
\phi (x) - [\phi ](d), & x > d. \end{cases}
\\
& \hbox{where } {\cal{L}}y  :=  -\ve y_{xx} +\kappa (x,t)y_x+ g(x,t) y_t,\ \hbox{and the coefficients are} \nonumber
\end{align}
\begin{align}\label{coeff-problem3}
\kappa (x,t) & := \sqrt{g}\bigl(a(x,t)+a(d,t)(\psi _d(x)-1)\bigr), \
  \\ \nonumber
 \\
 \psi _d(x)& :=  \begin{cases}
\displaystyle \frac{ d-x}{d}, & x < d, \\[2ex]
\displaystyle \frac{x-d}{1-d}, & x > d.
\end{cases}
 \quad g(x,t) :=  \begin{cases}
\displaystyle
 \left(\frac{d(t)}{d}\right)^2, & x < d, \\[2ex]
\displaystyle
\left(\frac{1-d(t)}{1-d} \right)^2, & x > d.
\end{cases}
 \label{def-g}
\end{align}
\end{subequations}
Observe that  $g$ is a discontinuous function along $x=d$ and  $[g](d,t) <0 $ for all $t >0 $. In addition,  for all $t \geq 0$,  $\vert g -1 \vert \leq C\vert d(t) -d \vert \leq Ct$ and
\begin{subequations}
\begin{eqnarray}
1 \leq \sqrt{g} \leq 1 +\frac{T\Vert a \Vert}{d}, \ x \leq d \quad , \quad \delta \leq \sqrt{g} \leq 1,\ x \geq d,
 \label{bounds-g}
\end{eqnarray}
\end{subequations}
The transmission condition $ \ [\frac{1}{\sqrt{g}}y _x](d,t) =0$ corresponds to $ \ [\hat y _s](d(t),t) =0$.
Note that there exists a positive constant $A$, such that
\begin{equation}\label{def-A}
\vert \kappa (x,t) \vert\leq  A \vert d-x \vert,\ A:= \left(1+\frac{T\Vert a \Vert}{d}\right)\left( \Vert a_x \Vert +   \Vert a \Vert\max \left\{ \frac{1}{d},\frac{1}{1-d} \right\} \right).
\end{equation}
We  associate the following differential operator
\[
{\cal L}'_\ve \omega (x,t) :=
\begin{cases}
\omega (x,t), &  x=0,1, \, t\ge 0, \\
 \omega (x,0), &  x\in(0,d)\cup(d,1),\\
-\ve \omega_{xx} + \kappa (x,t)\omega_x +g(x,t) \omega_t, & x\neq d ,\ t > 0,\\
\ -\left[\frac{1}{\sqrt{g}} \omega _x \right]& x=d,\ t\geq 0,
\end{cases}
\]
with this transformed problem. For this operator ${\cal L}'_\ve$ a comparison principle holds  \cite{disc-cd1}.
\begin{theorem}\cite{disc-cd1} \label{max_princ}
Assume that a function $\omega \in  {\cal C}^0(\bar Q) \cap {\cal C}^{2} (Q ^- \cup Q ^+ )$  satisfies
${\cal L}'_\ve \omega(x,t)  \geq  0, \ \mbox{for all} \  (x,t)\in  \bar Q$  then $ \omega(x,t)  \geq  0$, for all $ (x,t) \in  \bar Q$.
\end{theorem}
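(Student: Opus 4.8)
The plan is an argument by contradiction organised around an explicit \emph{strict} barrier adapted to each piece of the operator ${\cal L}'_\ve$: the parabolic operator on $Q^-\cup Q^+$, the pointwise identity on the Dirichlet and initial boundaries, and the interface flux condition at $x=d$. For a constant $\mu>0$ to be fixed, set
\[
\psi(x,t):=e^{\mu t}\,\bigl(2-\psi_d(x)\bigr),
\]
with $\psi_d$ the piecewise-linear tent function of (\ref{def-g}), so that $0\le\psi_d\le1$, $\psi_d(d)=0$, $\psi_d'(d^-)=-1/d$ and $\psi_d'(d^+)=1/(1-d)$. Then $\psi\ge1$ on $\bar Q$ and, since $\psi_d$ is linear on each sub-domain, $\psi_{xx}\equiv0$ on $Q^-\cup Q^+$. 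I would first show that $\psi$ is a strict supersolution, ${\cal L}'_\ve\psi>0$ everywhere on $\bar Q$, and then transfer this to $\omega$ via the quotient ${\cal R}:=\omega/\psi$.

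Checking the supersolution property: on $x=0,1$ and on $t=0$ with $x\neq d$, ${\cal L}'_\ve\psi$ equals $\psi$ there, hence $\ge1>0$. On $Q^-\cup Q^+$,
\[
{\cal L}'_\ve\psi=-\ve\psi_{xx}+\kappa\psi_x+g\psi_t=e^{\mu t}\bigl(-\kappa\,\psi_d'+g\,\mu\,(2-\psi_d)\bigr),
\]
and using $|d-x|\,|\psi_d'|=|\psi_d|\le1$ together with $|\kappa(x,t)|\le A|d-x|$ from (\ref{def-A}) gives $|\kappa\psi_d'|\le A$, while $g(2-\psi_d)\ge g\ge\delta^2$ by (\ref{bounds-g}); hence ${\cal L}'_\ve\psi\ge e^{\mu t}(\mu\delta^2-A)>0$ once $\mu>A/\delta^2$ is fixed. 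At the interface,
\[
{\cal L}'_\ve\psi(d,t)=-\Bigl[\tfrac1{\sqrt g}\psi_x\Bigr](d,t)=e^{\mu t}\Bigl(\tfrac{\psi_d'(d^+)}{\sqrt{g(d^+,t)}}-\tfrac{\psi_d'(d^-)}{\sqrt{g(d^-,t)}}\Bigr)=e^{\mu t}\Bigl(\tfrac{1}{(1-d)\sqrt{g(d^+,t)}}+\tfrac{1}{d\sqrt{g(d^-,t)}}\Bigr)>0 .
\]
This last computation is what fixes the sign convention in the barrier: the tent function must be \emph{subtracted}, so that $2-\psi_d$ has a downward kink at $x=d$.

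Now suppose $\omega<0$ somewhere. Since $\psi\ge1$, the quotient ${\cal R}=\omega/\psi$ inherits the regularity of $\omega$ (continuous on $\bar Q$, twice differentiable on $Q^-\cup Q^+$) and attains a minimum on the compact set $\bar Q$; by assumption this minimum is negative, say at $(x_0,t_0)$. If $x_0\in\{0,1\}$ or $t_0=0$, then $\omega(x_0,t_0)\ge0$ — immediately from ${\cal L}'_\ve\omega\ge0$ when $x_0\in\{0,1\}$ or when $t_0=0,\,x_0\neq d$, and from continuity of $\omega$ (with $\omega(x,0)\ge0$ for $x\neq d$) when $(x_0,t_0)=(d,0)$ — so ${\cal R}(x_0,t_0)\ge0$, contradicting negativity; hence $t_0>0$. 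Writing $\omega={\cal R}\psi$, on $Q^-\cup Q^+$
\[
{\cal L}'_\ve\omega=\psi\Bigl(-\ve{\cal R}_{xx}+\bigl(\kappa-\tfrac{2\ve\psi_x}{\psi}\bigr){\cal R}_x+g{\cal R}_t\Bigr)+{\cal R}\,{\cal L}'_\ve\psi .
\]
If $x_0\in(0,d)\cup(d,1)$, then at $(x_0,t_0)$ we have ${\cal R}_x=0$, ${\cal R}_{xx}\ge0$ and ${\cal R}_t\le0$ (a one-sided inequality if $t_0=T$), so the first bracket is $\le0$ and, since ${\cal R}(x_0,t_0)<0$ while ${\cal L}'_\ve\psi(x_0,t_0)>0$, we get ${\cal L}'_\ve\omega(x_0,t_0)<0$, a contradiction. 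If $x_0=d$, then $x\mapsto{\cal R}(x,t_0)$ has a minimum at the interior point $x=d$, so ${\cal R}_x(d^-,t_0)\le0\le{\cal R}_x(d^+,t_0)$; using continuity of $\psi$ and ${\cal R}$ across $x=d$ and differentiating $\omega={\cal R}\psi$,
\[
\Bigl[\tfrac1{\sqrt g}\omega_x\Bigr](d,t_0)=\psi(d,t_0)\Bigl[\tfrac1{\sqrt g}{\cal R}_x\Bigr](d,t_0)+{\cal R}(d,t_0)\Bigl[\tfrac1{\sqrt g}\psi_x\Bigr](d,t_0),
\]
where the first term is $\ge0$ (as $\psi>0$ and ${\cal R}_x(d^+,t_0)\ge0\ge{\cal R}_x(d^-,t_0)$) and the second is a product of two strictly negative quantities, hence $>0$; thus ${\cal L}'_\ve\omega(d,t_0)=-[\tfrac1{\sqrt g}\omega_x](d,t_0)<0$, again a contradiction. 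Therefore $\omega\ge0$ on $\bar Q$.

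The crux is obtaining \emph{strict} sign information: applied to $\omega$ itself, an interior minimum only yields ${\cal L}'_\ve\omega\le0$ and the interface point only yields $[\tfrac1{\sqrt g}\omega_x]\ge0$, neither of which contradicts ${\cal L}'_\ve\omega\ge0$. The barrier $\psi$ is designed precisely so that the strict positivity of ${\cal L}'_\ve\psi$ — including the correct sign of its interface flux, which is why the tent function is subtracted — upgrades all of these to strict inequalities as soon as ${\cal R}$ dips below zero. The remaining points (the one-sided sign conditions on ${\cal R}_x$ at $x=d$, the terminal time $t_0=T$, and the balance of $|\kappa|\le A|d-x|$ against the zeroth-order gain $g\mu(2-\psi_d)$ supplied by the factor $e^{\mu t}$) are routine.
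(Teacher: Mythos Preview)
Your argument is correct. The paper does not actually prove Theorem~\ref{max_princ}: it is quoted from \cite{disc-cd1}, so there is no in-paper proof to compare with. Your barrier-function route is the standard way to establish such a comparison principle with an interface transmission condition, and all the pieces check out: the choice $\psi=e^{\mu t}(2-\psi_d)$ gives $\psi\ge1$, $\psi_{xx}\equiv0$ on each sub-domain, the bound $|\kappa\,\psi_d'|\le A\psi_d\le A$ follows exactly from (\ref{def-A}) and the identity $|d-x|\,|\psi_d'|=\psi_d$, and the flux jump $[\tfrac1{\sqrt g}\psi_x](d,t)<0$ has the correct sign because the downward kink of $2-\psi_d$ at $x=d$ makes $-[\tfrac1{\sqrt g}\psi_x]>0$. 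The quotient argument then cleanly forces a strict contradiction at any negative minimum of ${\cal R}=\omega/\psi$, whether it lies in $Q^-\cup Q^+$ or on the interface $x=d$, and your separate treatment of the corner $(d,0)$ via continuity of $\omega$ is the right way to close that case.
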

Using this comparison principle we see from (\ref{def-A}) that
\[
\vert y(x,t) \vert \leq  \frac{A}{\delta^2 } (1+ \Vert f \Vert) t + \Vert \phi \Vert_{\bar Q^- \cup \bar Q^+} + \vert[\phi](d) \vert +  \left\{ \begin{array}{ll}
\displaystyle \frac{x}{d}, & x \leq d, \\[2ex]
\displaystyle \frac{1-x}{1-d}, & x \geq d. \end{array}
\right.
\]
That is, $\Vert y \Vert \leq C$.

The solution of problem  (\ref{problem3}) can be decomposed into the sum of a regular component $v$, a boundary  layer component $w$, a weakly singular component and an interior layer $z$ component:
\begin{equation}\label{decomposition_y}
y=v+w+ 0.5 \sum _{i=2}^4 [\phi ^{(i)} ] (d)  \frac{(-1)^i}{i!}   \psi _i + z.
\end{equation}
In Appendix B, the regular component $\hat v \in C^{4+\gamma}(\hat Q)$ and the boundary layer component $\hat w \in C^{4+\gamma}(\hat Q)$
are defined in the original variables $(s,t)$. The mapping $X:(s,t) \rightarrow (x,t)$  defined in (\ref{map}) is not smooth along the interface $x=d$. Hence, in the transformed variables the regular component $v$ is defined  so that $v \in (C^{4+\gamma}(\bar Q^+) \cup C^{4+\gamma}(\bar Q^-)) \cap C^1(\bar{\hat Q})$ and   satisfies the  bounds
\[
\Bigl \vert \frac{\partial ^{i+j}}{\partial x ^i \partial t ^j } v(x,t)\Bigr \vert  \leq  C , \ 0 \leq i+j \leq  2;\quad
\Bigl \vert \frac{\partial ^{3}    }{\partial x ^3  } v(x,t)\Bigr \vert  \leq  C(1+\ve ^{-1}) ;\qquad  x \neq d.
\]
Also, the boundary layer function $ w \in (C^{4+\gamma}(\bar Q^+) \cup C^{4+\gamma}(\bar Q^-)) \cap C^1(\bar {\hat Q})$ and  satisfies the bounds~\cite[bound in (9)]{disc-cd1}
\begin{equation}\label{lay_comp}
\Bigl \vert \frac{\partial ^{j+m}w}{\partial x^j \partial t^m} (x,t) \Bigr \vert \leq C  \ve^{-j} (1+\ve ^{1-m}) e^{-\frac{ \alpha \delta(1-x)}{2 \ve}}, \ 0\leq j\leq 3, \ m=1,2.
\end{equation}
 As $y,v,w$  and $\psi _i, \, i=2,3,4$ are all bounded, then the  interior layer function $z$ is also bounded.

\begin{remark} \label{rem:z}
We note that if $\hat a(s,t) =a(t)$, then $z  \equiv 0$ and the coordinate transformation is not needed for this problem class. Error estimates and extensive numerical results for this problem class are given in~\cite{arxiv}.
\end{remark}

\begin{theorem} \label{th_z_component}
The interior layer component  $ z \in C^{2+\gamma}(\bar Q^-)\cup C^{2+\gamma}(\bar Q^+)$
 satisfies the bounds
\begin{equation}
 \vert z(x,t) \vert \leq Ce^{-\frac{\gamma g(t) (d-x)^2}{4\ve t}} ,\quad  (x,t) \in  Q. \label{bds-z-a}
\end{equation}
In addition, for $x \neq d$,
\begin{subequations} \label{bds-final}
 \begin{align}
 \Bigl \Vert \frac{\partial ^{i+j} z}{\partial x ^i \partial t ^j }  \Bigr \Vert  &\leq C\bigl( 1+\ve^{-i/2} \bigr), \ i+2j\leq 3;\label{bds-z-c}\\
 \Bigl \vert \frac{\partial ^2  z}{\partial t^2 }  (x,t) \Bigr \vert &\leq C \left( 1+\sqrt{\frac{\ve }{ t}}\right). \label{bds-z-d}
\end{align}
\end{subequations}
\end{theorem}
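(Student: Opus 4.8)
The plan is to realise $z$ as the solution of a singularly perturbed convection--diffusion problem whose data is concentrated in an $O(\sqrt{\ve t})$ neighbourhood of $x=d$, and to treat the pointwise bound and the derivative bounds by two separate mechanisms. First I would write down the problem satisfied by $z$: subtracting from the equation for $y$ the equations obeyed by the regular component $v$, the boundary layer $w$ and the weakly singular correctors $\psi_i$ (all constructed in the appendices), one obtains ${\cal L}z=F$ on $Q^-\cup Q^+$, together with $z(x,0)=0$, $[z](d,t)=0$, $\bigl[\tfrac1{\sqrt g}z_x\bigr](d,t)=0$, and lateral data on $x=0,1$ that is at most exponentially small in $\ve$ (recall that $d$ stays a fixed distance from $0$ and $1$). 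Using $|a(d,t)-a(x,t)|\le C|d-x|$, the (negative) jump of $g$, and $\sup_{\eta\ge0}\eta^k e^{-c\eta^2}<\infty$, the residual forcing obeys $|F(x,t)|\le C|d-x|(\ve t)^{-1/2}e^{-g(x,t)(d-x)^2/(4\ve t)}+(\hbox{terms of the same or better order})$, so in particular $\|F\|\le C$; moreover, since $\hat a_s(d,0)=0$ the factor $a(d,t)-a(x,t)$ vanishes to higher order near the corner, which upgrades this to $|F(x,t)|\le C(\sqrt{\ve t}+t)$ for $(x,t)$ close to $(d,0)$. These two descriptions of $F$ drive the two halves of the theorem.

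For the pointwise bound (\ref{bds-z-a}) it is cleanest to argue in the untransformed variables. There $\hat z(s,t):=z(x(s,t),t)$ satisfies $\hat L\hat z=\hat F$ with $\hat L=-\ve\partial_{ss}+\hat a\,\partial_s+\partial_t$ and \emph{no} transmission condition (it is exactly the constraint $[\phi'](d)=0$ that makes $\hat y$, hence $\hat z$, lie in $C^1(\bar{\hat Q})$), so the standard parabolic comparison principle applies on $\bar{\hat Q}$. I would use the barrier $\hat\Psi(s,t):=C_0\,e^{-\gamma(d(t)-s)^2/(4\ve t)}$; a short computation gives
\[
\hat L\hat\Psi=\hat\Psi\Bigl[\tfrac{\gamma(1-\gamma)(d(t)-s)^2}{4\ve t^2}+\tfrac{\gamma}{2t}+\tfrac{\gamma(d(t)-s)\bigl(\hat a(s,t)-\hat a(d(t),t)\bigr)}{2\ve t}\Bigr],
\]
and since $|\hat a(s,t)-\hat a(d(t),t)|\le\|\hat a_s\|\,|d(t)-s|$ the last term is bounded below by $-\tfrac{\gamma\|\hat a_s\|(d(t)-s)^2}{2\ve t}\hat\Psi$. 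The restriction (\ref{T-limit}) on $T$, with $t\le T$ and $\delta<1$, is precisely what forces $\tfrac{1-\gamma}{t}-2\|\hat a_s\|>0$, hence $\hat L\hat\Psi\ge\tfrac{\gamma}{2t}\hat\Psi\ge\tfrac{\gamma}{2T}\hat\Psi$. Because $\gamma<1$ we have $\hat\Psi\ge C_0e^{-(d(t)-s)^2/(4\ve t)}$, so the arithmetic--geometric mean inequality gives $\hat L\hat\Psi\ge|\hat F|$ once $C_0$ is chosen large (in terms of $T,\delta,\gamma,\alpha$ and the data), and for the same reason $\hat\Psi$ dominates the exponentially small lateral data; the mild discontinuity of $\hat\Psi$ at $(d,0)$ is removed by first replacing $t$ by $t+\sigma$ and letting $\sigma\to0^+$. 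Applying the comparison principle to $\hat\Psi\pm\hat z$ and then transforming back --- using the identity $(d(t)-s)^2=g(x,t)(d-x)^2$, immediate from the definitions (\ref{map}) of $x(s,t)$ and (\ref{def-g}) of $g$ --- yields exactly (\ref{bds-z-a}), and in particular $\|z\|\le C$.

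For the derivative bounds (\ref{bds-final}) I would pass to the stretched variable $\eta:=(x-d)/\sqrt\ve$ (equivalently, stretch $s-d(t)$ by $\ve^{-1/2}$ in the original domain, which is the point of the coordinate transformation: the layer then sits on a fixed line). On each of $Q^-,Q^+$, $z(x,t)=\tilde z(\eta,t)$ solves $-\tilde z_{\eta\eta}+\tilde\kappa\,\tilde z_\eta+g\,\tilde z_t=\tilde F$ with $|\tilde\kappa|\le A|\eta|$ and $|\tilde F|\le C|\eta|\,t^{-1/2}e^{-c\eta^2/t}$; all coefficients and $\tilde F$ are smooth with $\ve$-uniformly bounded derivatives on any set where $|\eta|$ and $1/t$ are bounded, and $\|\tilde z\|$ is bounded by (\ref{bds-z-a}). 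Interior parabolic Schauder estimates of order $3+\gamma$, available since $\hat a,\hat f\in C^{4+\gamma}$, on unit parabolic cylinders centred at points with $t_0\ge\hbox{const}>0$ then give $\|\tilde z\|_{3+\gamma}\le C$, and transferring back through $\partial_x=\ve^{-1/2}\partial_\eta$ gives $\|\partial_x^i\partial_t^j z\|\le C(1+\ve^{-i/2})$ for $i+2j\le3$, i.e.\ (\ref{bds-z-c}), away from the initial line. These estimates are then extended down to $t=0$ using the compatibility hypotheses (\ref{comp0})--(\ref{comp1}) at the corners $(0,0),(1,0)$ and the extra condition $\hat a_s(d,0)=0$ at $(d,0)$, which (as above) makes $\tilde F$ vanish to the correct order there. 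Finally, for (\ref{bds-z-d}) I would differentiate ${\cal L}z=F$ once in $t$ and repeat the argument for $z_t$: away from $t=0$ this gives $\|z_{tt}\|\le C$, while near the corner the only contribution that is not $O(1)$ comes from $\partial_tF$, which, after using $\hat a_s(d,0)=0$ and the Gaussian factor, is of size $O(1+\sqrt{\ve/t})$, and estimating it in the stretched variable reproduces the claimed factor $1+\sqrt{\ve/t}$.

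The \textbf{main obstacle} is the analysis at the corner $(d,0)$, where the interior layer, of width $O(\sqrt{\ve t})$, degenerates and where the initial line, the interface $x=d$ and the outflow boundary layer all interact. Showing that $z$ and its derivatives inherit no worse than the stated singularities there is exactly where the three structural assumptions $[\phi'](d)=0$, $\hat a_s(d,0)=0$ and (\ref{T-limit}) are used, and the factor $1+\sqrt{\ve/t}$ in (\ref{bds-z-d}) is the residual effect of the mild incompatibility of the parabolic data at that corner; making this rigorous needs a genuine near-field analysis --- for instance comparing $z$ near $(d,0)$ with the solution of a constant-coefficient problem obtained by freezing $a$ at $x=d$, or a weighted Schauder estimate --- in contrast to the softer arguments that suffice for $t$ bounded away from $0$.
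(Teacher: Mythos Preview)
Your barrier argument for the pointwise bound (\ref{bds-z-a}) is sound and is essentially the paper's computation of ${\cal L}E_\gamma$ carried out in the original variables, where there is no interface; if anything it is a slight simplification, since you avoid splitting into $Q^\pm$.

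The gap is in the derivative bounds (\ref{bds-final}), and you have in fact flagged it yourself under ``main obstacle'' without resolving it. Interior Schauder estimates of order $3+\gamma$ in the stretched variable $\eta=(x-d)/\sqrt\ve$ require the forcing $\tilde F(\eta,t)$ to lie in $C^{1+\gamma}$ up to $t=0$ with an $\ve$-uniform seminorm. But after using $a_x(d,0)=0$, the leading part of $\tilde F$ still contains the term $c\sqrt{\ve}\,\eta^2 t^{-1/2}e^{-g\eta^2/(4t)}$ coming from $\tfrac12 a_{xx}(d,0)(x-d)^2(\ve\pi t)^{-1/2}E$; a direct check at $(\eta,t)=(\sqrt{t_0},t_0)$ shows that $\partial_\eta\tilde F$ does not tend to zero as $t_0\to0$, so $\tilde F\notin C^{1+\gamma}$ at the corner and your Schauder step yields at best $z\in C^{2+\gamma}$ there, which is insufficient for the case $i+2j=3$ in (\ref{bds-z-c}). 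The paper's resolution is exactly the concrete near-field analysis you allude to: it Taylor-expands $a(d,t)-a(x,t)$ in $(x-d)$ and $t$ and writes down an \emph{explicit} weakly singular function $z_q$ --- a linear combination of $\sqrt t\,(d-x)^k E$-type terms with coefficients fixed by $a_{xx}(d,0)$, $a_{xxx}(d,0)$, $a_{xt}(d,0)$ --- for which ${\cal L}z_q$ equals the singular part of the forcing up to a $C^{2+\gamma}(\bar Q)$ remainder. The bounds (\ref{bds-final}) for $z_q$, including the factor $1+\sqrt{\ve/t}$ in (\ref{bds-z-d}), are then read off by direct differentiation. What remains (the paper calls these pieces $z_c$ and $z_R$) has forcing in $C^{2+\gamma}(\bar Q)$, hence lies in $C^{4+\gamma}(\bar Q^\pm)$, and only at this point does the Ladyzhenskaya stretched-variable argument apply uniformly down to $t=0$. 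Your proposal has the right architecture but is missing this explicit subtraction, which is the substance of the proof.
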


\begin{proof}
The  interior layer function $ z$   is decomposed into the sum of two subcomponents
\begin{equation} \label{decomposition_z}
 z=z_c+ 0.5 [\phi ](d)   z_p,
\end{equation}
where $z_c$ satisfies the problem
\begin{subequations}\label{zc-def}
\begin{equation}
{\cal{L}}z_c= -0.5 \sum _{i=2}^4 [\phi ^{(i)} ] (d)  \frac{(-1)^i}{i!}  {\cal{L}} \psi _i (x,t), \ x \neq d,
\end{equation}
\begin{align}
z_c(x,0)=  z_c(0,t) =z_c(1,t) =0;\ [z_c](d,t)=
\left[\frac{1}{\sqrt{g}}\frac{\partial z_c}{\partial x}\right](d,t)=0,
\end{align}
\end{subequations}
and  $z_p$ satisfies the problem
\begin{subequations}\label{z-def}
\begin{align}
{\cal{L}}z_p=  \bigl(a(d,t)- a(x,t)\bigr) \frac{g(x,t)}{\sqrt{\ve \pi t}}  e^{-\frac{g(x,t)(x-d)^2}{4\ve t}}, \ x \neq d;
\\
z_p(x,0)=
z_p(0,t) =z_p(1,t)= 0;\  [z_p](d,t)=0 ; \left[\frac{1}{\sqrt{g}}\frac{\partial  z_p }{\partial x}\right](d,t)=0. \label{jump-z}
\end{align}
\end{subequations}
In  Appendix C, the subcomponent  $z_p$  is further decomposed into the sum~\eqref{DecompZP}
\[
z_p = z_q+ z_R,
 \]
where it is established that $z_R \in C^{4+\gamma}(\bar Q^-)\cup C^{4+\gamma}(\bar Q^+)$ and the weakly singular function $ z_q \in C^{2+\gamma}(\bar Q^-)\cup C^{2+\gamma}(\bar Q^+)$  is explicitly identified  in~\eqref{ZQ}.
Bounds on the derivatives of  the subcomponent $z_q$  are also  given in~\eqref{ZQxt}. Moreover, it is established  in  (\ref{bds-zc}) and  (\ref{bds-zr}) that
\[
\vert {\cal{L}}z_c (x,t) \vert \leq C\sqrt{\ve}  E_\gamma (x,t) \quad \hbox{and } \quad \vert {\cal{L}}z_R (x,t) \vert \leq C   E_\gamma (x,t)
\]
where $E_\gamma (x,t) := e^{-\frac{\gamma g(t) (d-x)^2}{4\ve t}}.$
 From (\ref{T-limit})  and~\eqref{bounds-g}, note the following
\begin{align*}
  {\cal{L}}E_\gamma&= \frac{\gamma gE_\gamma }{2t}\left(1+
  (1-\gamma)\frac{g(d-x)^2}{2\ve  t}+(a(x,t)+a(d,t)(\psi_d(x)-1))\frac{\sqrt{g} (d-x)}{\ve }
 \right) \\
  & \ge \frac{\gamma gE_\gamma }{2t}\left(1+
  (1-\gamma)\frac{g(d-x)^2}{2\ve  t}+(a(x,t)-a(d,t))\frac{\sqrt{g} (d-x)}{\ve }
 \right)\\
&\geq \frac{\gamma g E_\gamma }{2t}\left(1+
 \left[ (1-\gamma)- \frac{2T}{\sqrt{g}} \Vert \hat a_s  \Vert)\right] \frac{g(d-x)^2}{2\ve t}
 \right) \\
&\geq \frac{\gamma g E_\gamma }{2t} \geq \frac{\gamma  \delta^2  E_\gamma }{2T}.
\end{align*}
Using a comparison principle seperately on each subdomain $Q^-$ and $Q^+$, we can then obtain the bounds
\[
\vert z_c (x,t) \vert \leq  C E_\gamma (x,t), \quad \vert z_R (x,t) \vert \leq  C E_\gamma (x,t). %+ C\sqrt{t}E_\gamma.
\]
Combining this bound with the bounds on  $z_q(x,t)$ (from~\eqref{ZQxt} in the final  Appendix C) we achieve the pointwise bound in (\ref{bds-z-a}).

We  transform the problems   ${\cal{L}}z_c (x,t) =: F_c(x,t)$, ${\cal{L}}z_R (x,t) =: F_R(x,t)$ back to the original variables
\[
\hat L\hat z_c (s,t) = \hat F_c(s,t),\quad \hbox{and} \quad \hat L\hat z_R (s,t) = \hat F_R(s,t),
\]
and now apply the standard  argument from \cite[pg.352]{ladyz} , separately on the subdomains $ Q^-$ and  $Q^+$,  to deduce the remaining bounds.
\end{proof}

\section{ Numerical method in the transformed domain and associated error analysis} \label{sec:Method+Error}

We approximate the solution of problem (\ref{problem3}) on a rectangular grid in the computational domain
 $ \bar Q^{N,M}=\{x_i\}^N_{i=0} \times \{t_j\}_{j=0}^M $ which concentrates mesh points in the interior and boundary layers.  We denote by $\partial Q^{N,M}:=\bar Q^{N,M}\backslash Q.$ The mesh $\bar Q^{N,M}$ incorporates a uniform mesh  ($t_j:=k j$ with $k=T/M$) for the time variable and the  grid points for the space variable are distributed by means of a piecewise uniform Shishkin mesh  with $h_i:= x_i-x_{i-1}$.
 Based on the bounds (\ref{lay_comp}) and  (\ref{bds-z-a}) on the layer components, this mesh is defined with respect to the transition points
\begin{subequations}\label{transition points}
\begin{align}
\sigma _1 &:=\min \left\{\ds\frac{d}{4},  2\sqrt{T\ve} \ln N \right\}, \
 \sigma _2 := \min  \left \{ 1-d(T), \frac{d}{4}, 2\sqrt{\frac{T\ve}{\delta}} \ln N \right \},\\
\sigma &:= \min  \left\{ \frac{1-(d+\sigma_2)}{2}, \frac{2 \ve}{\alpha \delta} \ln N \right\},
\end{align}
\end{subequations}
which split  the interval $[0,1]$ into the five subdomains
\begin{equation}
[0,d-\sigma_1]\cup[d-\sigma_1,d] \cup[d, d+\sigma _2] \cup [d +\sigma_2, 1 -\sigma ] \cup [1- \sigma,1].
\end{equation}
The grid points are uniformly distributed within each subinterval in the ratio
$\frac{3N}{8}:\frac{N}{8}:\frac{N}{8}:\frac{N}{4}:\frac{N}{8}$.
We discretize problem (\ref{problem3}) using an Euler method to approximate the
time variable and an upwind finite difference operator to approximate in space. Hence the discrete problem\footnote{We use the following notation for various finite difference operators:
\begin{align*}
 aD_x Y (x_i,t_j) := 0.5 (a(x_i,t_j)+\vert a (x_i,t_j)\vert)D^-_x Y(x_i,t_j) + 0.5 (a(x_i,t_j)-\vert a (x_i,t_j)\vert)D^+_x Y (x_i,t_j)  \\
D^-_t Y (x_i,t_j) := \ds\frac{Y(x_i,t_j)-Y(x_i,t_{j-1})}{k}, \quad
D^-_x Y(x_i,t_j) :=\ds\frac{Y(x_i,t_j)-Y (x_{i-1},t_j)}{h_i}, \\
D^+_x Y (x_i,t_j)  :=\ds\frac{Y(x_{i+1},t_j)-Y(x_i,t_j)}{h_{i+1}}, \
 \delta^2_x Y(x_i,t_j) := \ds\frac{2}{h_i+h_{i+1}}(D_x^+Y(x_i,t_j)-D^-_x Y(x_i,t_j)).
 \end{align*}}}
 is: Find $Y$ such that
\begin{subequations} \label{discrete-problem}
\begin{align}\label{discrete-problem_interior}
(-\ve  \delta ^2_x+\kappa D_x +   g D^-_t )Y &= {\cal{L}}y(x_i,t_j), \, x_i \neq   d,  \, t_j >0, \quad
\\
\ \left[\frac{1}{\sqrt{g}}D_xY\right] (d,t_j) &= 0, \quad  \, x_i=d, t_j >0,\\
Y&=y(x_i,t_j),  \quad (x_i,t_j)\in \partial Q^{N,M};
\\
\hbox{where} \quad \ \left[\frac{1}{\sqrt{g}} D_xY \right] (d,t_j) &:= \frac{1-d}{1-d(t_j)}D^+_x Y(d,t_j) - \frac{d}{d(t_j)}D^-_x Y(d,t_j). \nonumber
\end{align}
\end{subequations}
Associated with this discrete problem is the upwinded finite difference operator: For any mesh function $U$,  define
\begin{eqnarray*}
L^{N,M}U (x_i,t_j):=\left\{
\begin{array}{lll}
(-\ve  \delta ^2_x +\kappa D_x +   g D^-_t ) U(x_i,t_j), & x_i \neq  d, \, t_j>0, \\
 -\ve \left[\frac{1}{\sqrt{g}}D_xU\right] (x_i,t_j), & x_i =  d, \, t_j>0,\\
U(x_i,t_j), &  (x_i,t_j) \in \partial Q^{N,M}.
\end{array}
\right.
 \end{eqnarray*}
 This discrete operator satisfies a discrete comparison principle \cite{disc-cd1} and we can then establish that
\[
\vert Y(x_i,t_j) \vert \leq \frac{A}{\delta^2 } (1+ \Vert f \Vert) t_j +\Vert \phi \Vert_{\bar Q^- \cup \bar Q^+}  + \vert[\phi](d) \vert +   \left\{ \begin{array}{ll}
\displaystyle \frac{x_i}{d}, & x_i \leq d, \\[2ex]
\displaystyle \frac{1-x_i}{1-d}, & x_i \geq d. \end{array}
\right.
\]
Hence $\Vert Y \Vert_{\bar{Q}^{N,M}}\le C.$
To perform the error analysis the  discrete solution is decomposed into the sum
\[
 Y=V+W +0.5 \sum _{i=2}^4 [\phi ^{(i)} ] (d)  \frac{(-1)^i}{i!}  \Psi _i + Z;
\]
where $V$ and $W$ are the discrete counterparts to $v$ and $w$. Using a standard argument \cite{fhmos} one can establish that
\begin{equation}\label{smooth-bnd}
\Vert v+w- (V+W) \Vert _{\bar Q^{N,M}} \leq  C N^{-1} \ln N  + CM^{-1}.
\end{equation}
For the remainder of the numerical analysis we will assume that $\ve$ is sufficiently small so that
\[
\sigma _1 = \sigma _2 =2\sqrt{T\ve} \ln N, \quad \sigma = \frac{2 \ve}{\alpha \delta} \ln N .
\]
When this is not the case, the argument is classical as then $\ve ^{-1} \leq C \ln N $.

The additional terms   $\Psi _i , i=2,3,4;$ and  $Z$  are defined as follows:
For $i=2,3,4$
\begin{align*}
L^{N,M}\Psi _i&=  {\cal L}\psi _i \quad  x_i \neq  d, \, t_j>0;\\
\qquad \Psi _i &=\psi _i,  (x_i,t_j) \in \partial Q^{N,M}; \quad \left[\frac{1}{\sqrt{g}} D_x\Psi _i\right] (d,t_j)=0, \, t_j>0;
  \end{align*}
  and
  \begin{subequations} \label{disc-Z}
 \begin{align}
L^{N,M}Z & = {\cal{L}}z,\    x_i \neq  d, \, t_j>0;\\
  Z &=0,\  (x_i,t_j) \in \partial Q^{N,M}; \quad
 \left [\frac{1}{\sqrt{g}}D_xZ \right] (d,t_j)=0.
 \end{align}
 \end{subequations}
By the   discrete comparison principle, we have that  $\Vert \Psi _m \Vert \leq C(\sqrt{\ve } )^m, m=2,3,4$ and
we  can examine the truncation error for   $\psi _m,\  m=2,3,4$:
\begin{align*}
\vert L^{N,M} \bigl( \Psi _2 - \psi _2\bigr) (x_i,t_j) \vert &\leq  C\left(1+ \frac{\sqrt{\ve}}{\sqrt{  t_j}}\right) N^{-1}+C \left(1+\frac{\ve}{t_j}\right) M^{-1}, \quad x_i \neq d,
\\
\vert L^{N,M} \bigl( \Psi _3 - \psi _3\bigr) (x_i,t_j) \vert &\leq  C  N^{-1}+C\left(1+  \ve {\sqrt{\frac{\ve}{t_j}}}\right)M^{-1}, \  x_i \neq d,
\\
\vert L^{N,M} \bigl( \Psi _4 - \psi _4\bigr) (x_i,t_j) \vert &\leq  C  N^{-1}+CM^{-1}, \  x_i \neq d,
\\
\vert L^{N,M} \bigl( \Psi _m - \psi _m\bigr) (d,t_j) \vert &\leq C \sqrt{\ve} N^{-1} \ln N,\qquad m=2,3,4.
\end{align*}
 Applying the argument from  \cite{Zhemukhov1} (see \cite[Theorem 1]{arxiv} for more details)  we deduce that
 \begin{equation}\label{sing-bnd}
\vert (\Psi _m - \psi _m) \vert \leq C (N^{-1}\ln N + M^{-1}  \ln M), \quad m=2,3,4,
 \end{equation}
 where we have used the bounds established in Appendix A for the singular functions $\psi _m,\  m=2,3,4.$ From the proof of Theorem~\ref{th_z_component}, we have the bounds
\[
\vert L^{N,M}Z  (x_i,t_j) \vert , \vert z(x,t)\vert \leq C E_\gamma (x,t).
\]
 Also, as $\Vert Z \Vert \leq C$, we can use a discrete comparison separately on each subinterval to sharpen the bound on $Z(x_i,t_j)$.
\begin{theorem}\label{thm4}
For sufficiently large $N$ and $M \geq {\mathcal O}(\ln (N))$, the solution of (\ref{disc-Z}) satisfies the bounds
\begin{align*}
(a) \qquad \vert Z (x_i,t_j) \vert &\leq C
\frac{\prod _{n=1}^i \bigl(1+\frac{h_n}
{ \sqrt{2T\ve}}\bigr)  }{\prod _{n=1}^{N/2} \bigl(1+\frac{h_n}{ \sqrt{2T\ve}}\bigr)  }
+ C N^{-1}\ln N,    \quad x_i \leq d,\\
(b) \qquad \vert Z (x_i,t_j) \vert &\leq  C\prod _{n=N/2}^i \left(1+\frac{h_{n} }
{ \sqrt{2T\ve}}\right)^{-1} +CN^{-1}\ln N,
 \quad x_i \geq d.
\end{align*}
\end{theorem}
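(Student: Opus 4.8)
The plan is to prove both bounds by constructing explicit discrete barrier functions and invoking the discrete comparison principle separately on each subdomain $Q^-$ and $Q^+$. Since we have already established that $|L^{N,M}Z(x_i,t_j)| \leq C E_\gamma(x_i,t_j) = Ce^{-\gamma g(t)(d-x)^2/(4\ve t)}$ and that $\|Z\| \leq C$, the strategy is to dominate $Z$ by a mesh function that decays like the product $\prod(1+h_n/\sqrt{2T\ve})^{\pm 1}$ away from $x=d$, plus a consistency-error remainder of size $N^{-1}\ln N$. The $N^{-1}\ln N$ piece accounts for the region outside the fine mesh (where $E_\gamma$ is already of that order by the choice of transition point $\sigma_2 = 2\sqrt{T\ve/\delta}\ln N$ and $\sigma_1 = 2\sqrt{T\ve}\ln N$) and for the truncation error incurred there; the product piece captures the layer decay inside $[d-\sigma_1,d]\cup[d,d+\sigma_2]$.

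First I would treat the right subdomain $Q^+$ (part (b)), which is the cleaner case since $\sqrt g \in [\delta,1]$ there. I would introduce the mesh function $B_i := \prod_{n=N/2}^{i}(1+h_n/\sqrt{2T\ve})^{-1}$ for $x_i \geq d$ and verify that $(-\ve\delta_x^2 + \kappa D_x + gD_t^-)B$ is bounded below by a positive multiple of $E_\gamma$ on the fine mesh; the key elementary estimate is that for the upwinded operator with $\kappa D_x$ having the "correct" sign structure near the layer, the second-difference term $-\ve\delta_x^2 B_i$ dominates, exactly as in the standard Shishkin-mesh analysis of a one-dimensional layer of width $\sqrt\ve$. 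One also needs $g D_t^- B_i \geq 0$, which holds because $B_i$ is time-independent. Then $C_1 B_i + C_2 N^{-1}\ln N$ (with $C_2$ chosen so the constant function absorbs $E_\gamma$ outside the fine mesh and the boundary/interface contributions) serves as the barrier; comparison with $\pm Z$ gives (b). For the interface condition at $x=d$, I would check that the discrete transmission operator $[\frac{1}{\sqrt g}D_x(\cdot)]$ applied to the barrier has the right sign, using $[g](d,t)<0$.

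For the left subdomain $Q^-$ (part (a)), the structure is mirror-image: the layer decays as $x$ increases toward $d$, so the natural barrier is $\tilde B_i := \prod_{n=1}^{i}(1+h_n/\sqrt{2T\ve})\big/\prod_{n=1}^{N/2}(1+h_n/\sqrt{2T\ve})$, which equals $1$ at $x_{N/2}=d$ and is exponentially small near $x=0$. The verification that $L^{N,M}\tilde B \geq c E_\gamma$ on the fine part of $Q^-$ is analogous, though here one must be slightly more careful because on $[0,d-\sigma_1]$ the mesh is (relatively) coarse and $E_\gamma$ is again $O(N^{-1}\ln N)$ there — so the same additive $N^{-1}\ln N$ term handles it. The boundary value $Z(0,t_j)=0$ and the interface matching close the argument. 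I would then stitch the two one-sided barriers together using the interface transmission condition and the overall $L^\infty$ bound $\|Z\|\le C$ to confirm consistency at $x=d$.

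The main obstacle will be the interaction between the spatially varying, discontinuous coefficient $g(x,t)$ (and hence the $t$-dependence of $E_\gamma$ through $g(t)$) and the barrier function: one must verify that $L^{N,M}E_\gamma$, or rather $L^{N,M}$ applied to the product barrier, retains a favorable sign despite the jump in $g$ across $x=d$ and the factor $g(x,t)$ multiplying the time derivative. The continuous analogue of this — the inequality ${\cal L}E_\gamma \geq \gamma\delta^2 E_\gamma/(2T)$ — was the crux of the proof of Theorem~\ref{th_z_component}, and the discrete version requires redoing that computation at the difference level, using $|\kappa(x,t)| \leq A|d-x|$, the bounds (\ref{bounds-g}) on $\sqrt g$, and the time-restriction (\ref{T-limit}) once more to control the cross term. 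A secondary technical point is ensuring $M \geq {\mathcal O}(\ln N)$ is exactly what is needed so that the temporal truncation error contribution stays within the claimed $N^{-1}\ln N$ bound and does not produce a spurious $M^{-1}$ term larger than the layer width.
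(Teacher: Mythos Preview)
Your plan has a genuine gap: the time-independent barrier you propose cannot be a supersolution for $L^{N,M}$. For the product function $B_i=\prod_{n=N/2}^{i}(1+h_n/\sqrt{2T\ve})^{-1}$ (and likewise its mirror image on $Q^-$), a direct calculation gives
\[
-\ve\,\delta_x^2 B_i \;=\; -\,\frac{h_{i+1}}{T(h_i+h_{i+1})}\Bigl(1+\tfrac{h_{i+1}}{\sqrt{2T\ve}}\Bigr)^{-1} B_i \;\le\; 0,
\]
so the diffusion contribution is \emph{negative}, not dominant. Near $x=d$ the convective coefficient satisfies $|\kappa(x_i,t_j)|\le A|d-x_i|$ and hence vanishes, and $\kappa$ can have either sign; thus $\kappa D_x B$ cannot supply the missing positivity. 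With $D_t^-B_i=0$ you are left with $L^{N,M}B_i\le 0$ in a neighbourhood of the layer, and no comparison argument is possible.

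The paper's proof repairs exactly this by multiplying the spatial product by a time-growing factor $\Psi(t_j)=(1-\theta T\ln N/M)^{-j}$, so that $gD_t^-(\Phi\Psi)=g\,\theta\ln N\cdot\Phi\Psi$ supplies a large positive term that swamps both the negative $-\ve\delta_x^2\Phi\ge -\Phi/T$ and, on the fine mesh, the potentially adverse convection term (bounded via $|\kappa|\sigma_1/\sqrt{2T\ve}\le C\ln N$). The parameter $\theta\ge 1+A$ is tuned to absorb the latter. This is also why the hypothesis $M\ge \mathcal O(\ln N)$ enters: it is needed so that $1-\theta T\ln N/M\in(c,1)$ and $\Psi$ is well-defined and bounded on $[0,T]$ --- not, as you suggest, to control a temporal truncation error. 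Finally, the coarse-mesh region requires a separate estimate (showing $(d-x_i)\Phi(x_i)/\sqrt{2T\ve}\le CN^{-1}\ln N$ there), which is where the additive $CN^{-1}\ln N$ in the statement originates; your sketch attributes that term to the size of $E_\gamma$ alone, but in fact it arises from the barrier construction itself.
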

 \begin{proof}
(a) For $0\leq x_i \leq d$, consider the following barrier function
\begin{align*}
B (x_i,t_j) & := C\Phi (x_i)\Psi (t_j), \quad \hbox{where}
\\
\Phi (x_i) & :=  \frac{\prod _{k=1}^i \bigl(1+\frac{h_k}
{ \sqrt{2T\ve}}\bigr)  }{\prod _{k=1}^{N/2} \bigl(1+\frac{h_k}{ \sqrt{2T\ve}}\bigr)  }
\quad \hbox{and} \quad
\Psi (t_j)  := \left(1-\frac{\theta T \ln N}{ M}\right)^{-j}.
\end{align*}
The parameter $ \theta \geq 1$ is  specified below and $M$ and $N$ are sufficiently large so that
\[
0<c \leq 1-\frac{\theta T \ln N}{ M} \quad \hbox{and} \quad  \ln N \geq 1+\frac{1}{T}.
\]
Note that  $\Phi (0,t_j) \geq 0, \Phi (x_i,0) \geq0, \Phi(d,t_j) = C\Psi (t_j) \geq C >0$. In addition,
\begin{align*}
 \sqrt{2\ve } D_x^+ \Phi (x_i) &= \frac{1}{\sqrt{T}} \Phi (x_i),\quad D_t^-\Psi (t_j) = \theta  \ln N \Psi (t_j) > 0, \\
\sqrt{2\ve }\left(1+\frac{h_i}{ \sqrt{2T\ve}}\right)D_x^-\Phi (x_i)&= \frac{1}{\sqrt{T}}\Phi (x_i),
\\
 -\ve \delta _x^2 \Phi (x_i) & =-\frac{1}{T}\frac{ h_i}{h_i+h_{i+1}}
 \left(1+\frac{h_i}{ \sqrt{2T\ve}}\right)^{-1} \Phi (x_i)
 \\
 & \geq - \frac{1}{T}\Phi (x_i).
\end{align*}
 So, it follows that, when $\kappa (x_i,t_j) \geq 0$ and for $N$ sufficiently large
\begin{align*}
(-\ve \delta _x^2 +\kappa D^-_x+ g D^-_t ) B(x_i,t_j) & \geq \left(\theta  \ln N - \frac{1}{T}\right) B(x_i,t_j) \geq  B (x_i,t_j)
\\
&
\geq e^{-\frac{\vert d-x_i \vert}{2\sqrt{\ve T}}}.
\end{align*}
We need a modification to the argument if at any mesh point $\kappa (x_i,t_j) < 0$. From (\ref{def-A}),
\[
(-\ve \delta _x^2 +\kappa D^+_x+ g D^-_t )B (x_i,t_j) \geq  (-\ve \delta _x^2 -A (d-x_i)D^+_x+gD^-_t )B (x_i,t_j).
\]
  For the fine mesh points,  where $d-\sigma  _1\leq x_i < d$,
\begin{align*}
\bigl( -\ve \delta _x^2 +\kappa D_x^+ +  gD_t^-) B (x_i,t_j)
&\geq \left( -\frac{1}{T}  -   \frac{A  (d-x_i)}{\sqrt{2T\ve}} + \theta \ln N
\right)B (x_i,t_j) \\
&\geq \left( -\frac{1}{T} -    A \ln N + \theta \ln N
\right)B (x_i,t_j).
\end{align*}
Then, by choosing $ \theta \geq 1+A$, we get that
\[
\bigl( -\ve \delta _x^2 +\kappa D_x^+ +  gD_t^-)B (x_i,t_j) \geq B (x_i,t_j),\quad  x_i \in [d-\sigma  _1, d).
\]
On the coarse mesh where $ 0<x_i<d-\sigma  _1 $, then using the inequality
$nt\leq (1+t)^n, t \geq 0$,
\begin{align*}
\frac{(d-x_i)}{\sqrt{2T\ve}}\Phi (x_i) &= \frac{\sigma  _1}{\sqrt{2T\ve}}\Phi (x_i) + \Phi (d-\sigma _1+h) \frac{(x_{3N/8}-x_i)}{\sqrt{2T\ve}} \left(1+\frac{H}
{ \sqrt{2T\ve}} \right)^{-(3N/8-i)}\\ &\leq CN^{-1} \ln N.
\end{align*}
Then, for sufficiently large $N$ and $0< x_i < d- \sigma _1$,
\begin{align*}
( -\ve \delta _x^2 +\kappa D_x + gD_t^-) B (x_i,t_j)
&\geq \left( -\frac{1}{T}  -   \frac{A  (d-x_i)}{\sqrt{2T\ve}} + \theta \ln N
\right)B (x_i,t_j) \\
&\geq \left( -\frac{1}{T}   + \theta \ln N
\right)B (x_i,t_j)
 -   CN^{-1} \ln N.
\end{align*}
Finish using a discrete comparison principle with the barrier function $B (x_i,t_j) + C t_j N^{-1}\ln N $.

(b)
For $x_i \geq d$, consider the following barrier function
\begin{align*}
B_1(x_i,t_j) &:= C\Phi _1(x_i) \Psi _1(t_j), \quad \hbox{where} \\
\Phi _1 (x_i) &:= \prod_{n=N/2}^i \left(1+\frac{ h_{n} }
{ \sqrt{2T\ve}}\right)^{-1}\quad \hbox{and} \quad \Psi _1 (t_j)  :=
\left(1-\frac{\theta T \ln N}{\delta ^2 M}\right)^{-j},
\end{align*}
 and we further assume that
\[
0<c \leq 1-\frac{\theta T \ln N}{\delta ^2 M}.
\]
Note first that $
B_1(d,t_j)  \geq C >0, \, B_1(x_i,0), B_1(1,t_j) \geq 0.
$
In addition, we have that
\[
\sqrt{2\ve} D_x^-\Phi_1 (x_i) = - \frac{1}{\sqrt{T}}\Phi _1(x_i), \ \Phi_1(d) =1,\quad  -\ve \delta _x^2 \Phi _1(x_i)
 \geq - \frac{1}{T} \Phi (x_i).
\]
Note that if $ 1-\sigma <x_i<1$, then
\[
\frac{(x_i-d)}{\sqrt{2T\ve}}\Phi_1 (x_i) \leq  \frac{1-\sigma-d}{\sqrt{2T\ve}}\Phi_1 (1-\sigma) + \frac{x_i-(1-\sigma)}{\sqrt{2T\ve}}\Phi_1 (x_i) \leq CN^{-1}.
\]
Hence, for sufficiently large $N$ and all the mesh points where $d < x_i < 1$, we repeat the argument from part (a) to conclude that
\[
\bigl( -\ve \delta _x^2 +\kappa D_x +  g D_t^-) B_1 (x_i,t_j)  \geq B_1 (x_i,t_j).
\]
\end{proof}

\begin{theorem}\label{thm5}
 Assume~\eqref{T-limit}. For sufficiently large $N$ and $M \geq {\mathcal O}(\ln (N))$, the solution of (\ref{disc-Z}) satisfies the bounds
\begin{equation}\label{z-bnd}
\vert Z (x_i,t _j) - z  (x_i,t _j)\vert \leq C \bigl(N^{-1}(\ln N)^2 +CM^{-1} \bigr)%\ln  (1+j).
\end{equation}
\end{theorem}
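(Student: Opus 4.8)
The plan is to estimate the error $e:=Z-z$ directly by the discrete comparison principle for $L^{N,M}$, splitting the estimate according to the mesh and using only the bounds of Theorem~\ref{th_z_component} (which assume~\eqref{T-limit}) together with the pointwise bounds on $Z$ from Theorem~\ref{thm4}. Since $L^{N,M}Z={\cal L}z$ at the interior nodes $x_i\neq d$ by~\eqref{disc-Z}, the error satisfies $L^{N,M}e(x_i,t_j)=({\cal L}-L^{N,M})z(x_i,t_j)$ for $x_i\neq d$, $e=0$ on $\partial Q^{N,M}$, and, because $z$ satisfies $[\frac{1}{\sqrt g}z_x](d,t)=0$ exactly while $Z$ satisfies its discrete analogue exactly, $L^{N,M}e(d,t_j)=-\ve\,[\frac{1}{\sqrt g}(D_x-\partial_x)z](d,t_j)$ at the interface.

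First I would dispose of the region away from the interior layer. For $x_i\notin[d-\sigma_1,\,d+\sigma_2]$ I would not estimate the truncation error at all, but simply use $|e|\le|Z|+|z|$: by Theorem~\ref{thm4} the telescoping products there evaluate to $O(N^{-1}\ln N)$ or smaller (the denominator, resp. the product, contains the $N/8$ fine-mesh factors, each $\ge 1+c\ln N/N$), and by~\eqref{bds-z-a} together with the choice of $\sigma_1,\sigma_2$ one has $|z(x_i,t_j)|\le CE_\gamma(x_i,t_j)\le CN^{-\gamma\delta^2\ln N}$ there. Hence $|e(x_i,t_j)|\le CN^{-1}\ln N$ whenever $x_i\le d-\sigma_1$ or $x_i\ge d+\sigma_2$; in particular this controls $e$ at the two endpoints of the fine region.

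Next I would bound the consistency errors inside the fine region. On the fine mesh $h_i=O(\sqrt\ve\,N^{-1}\ln N)$, and a Taylor expansion using~\eqref{bds-final} gives, for $x_i\in(d-\sigma_1,d)\cup(d,d+\sigma_2)$,
\[
|({\cal L}-L^{N,M})z(x_i,t_j)|\le CN^{-1}(\ln N)^2+Ck\Bigl(1+\sqrt{\tfrac{\ve}{t_j}}\Bigr),
\]
where the factor $(\ln N)^2$ arises from the first-order upwind truncation of the convective term, $h_i\,|\kappa(x_i,t_j)|\,|z_{xx}|\le C\sigma_1\ve^{-1}h_i$, together with $|\kappa|\le A|d-x_i|\le A\sigma_1=O(\sqrt\ve\ln N)$ from~\eqref{def-A}; the $-\ve\delta_x^2$ truncation is $O((N^{-1}\ln N)^2)$ and the backward-Euler truncation is controlled by~\eqref{bds-z-d}. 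At the mesh-transition nodes $x_i=d-\sigma_1$ and $x_i=d+\sigma_2$ the mesh is locally non-uniform and the truncation error need not be small, but $z$ and its divided differences are exponentially small there by~\eqref{bds-z-a}, so these nodes contribute at most $CN^{-1}\ln N$. For the interface, a Taylor expansion with $|z_{xx}|\le C\ve^{-1}$ on the fine mesh yields $|L^{N,M}e(d,t_j)|=\ve\,\bigl|[\frac{1}{\sqrt g}(D_x-\partial_x)z](d,t_j)\bigr|\le C\sqrt\ve\,N^{-1}\ln N\le CN^{-1}\ln N$.

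Finally I would apply the discrete comparison principle for $L^{N,M}$ on $([d-\sigma_1,d]\cup[d,d+\sigma_2])\times(0,T]$ with a barrier built from three pieces: a constant of size $CN^{-1}(\ln N)^2$ dominating the boundary data of Steps~1 and~3; a localized tent-type function, of the form $\max\{0,\sigma_1-|x-d|\}/\sqrt\ve$ scaled by $CN^{-1}\ln N$, which is only $O(\ln N)$ in size yet has slope $\sim\ve^{-1/2}$ and hence produces a sufficiently large negative jump in $\frac{1}{\sqrt g}D_x(\cdot)$ at $x=d$ to absorb the $O(\sqrt\ve N^{-1}\ln N)$ interface residual (its own interior residual $\kappa D_x(\cdot)$ is $O(N^{-1}(\ln N)^2)$, absorbed by enlarging the other constants); and an increasing time term $\propto\delta^{-2}\bigl((N^{-1}(\ln N)^2+M^{-1})t_j+M^{-1}\sqrt\ve\sqrt{t_j}\bigr)$, whose residual $gD_t^-(\cdot)\ge\delta^2D_t^-(\cdot)$ dominates the truncation error of Step~2, including the singular factor $k\sqrt{\ve/t_j}$, since $D_t^-\sqrt{t_j}\ge 1/(2\sqrt{t_j})$. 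This gives $|e|\le C(N^{-1}(\ln N)^2+M^{-1})$ on the fine region, and combined with Step~1 this yields~\eqref{z-bnd}. The main obstacle is the accumulation near $t=0$ of the backward-Euler truncation error: because $z_{tt}$ is only bounded by $C(1+\sqrt{\ve/t})$, a naive summation of local truncation errors is not evidently $O(M^{-1})$, so one must use the integral form of the truncation error $\tfrac1k|\int_{t_{j-1}}^{t_j}(s-t_{j-1})z_{tt}(s)\,ds|$ (equivalently the discrete device of~\cite{Zhemukhov1}, as in~\cite[Theorem~1]{arxiv}) to recover the clean $O(M^{-1})$ rate; the local non-uniformity of the mesh at $d\pm\sigma_{1},d\pm\sigma_2$ is a secondary nuisance, resolved by the exponential smallness of $z$ there.
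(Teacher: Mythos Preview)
Your proposal is correct and takes essentially the same approach as the paper: split into the coarse region (where $|Z|+|z|\le CN^{-1}\ln N$ follows directly from Theorems~\ref{th_z_component} and~\ref{thm4}) and the fine interior-layer region, bound the truncation error on the fine mesh using $|\kappa|\le C\sigma_1$ together with~\eqref{bds-final}, and finish via a discrete comparison against a piecewise-linear tent barrier centred at $x=d$ plus a time-growing term $\propto M^{-1}(t_j+\sqrt\ve\,\sqrt{t_j})$. Your barrier construction is slightly more explicit than the paper's (you add a $CN^{-1}(\ln N)^2\,t_j$ term to absorb the spatial truncation residual, and your interface bound $C\sqrt\ve\,N^{-1}\ln N$ is sharper than the paper's stated $CN^{-1}\ln N/\sqrt\ve$), but the structure is identical.
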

\begin{proof}
 From Theorem~\ref{th_z_component} and using $e^{-\theta s^2} \leq e^{\frac{1}{4\theta }}e^{- s}$, we deduce that
\[
\vert z(x_i,t_j)  \vert \leq C E_\gamma (x_i,t_j)  \leq Ce^{-\frac{\vert d-x_i \vert}{2\sqrt{\ve T}}}.
\]
 Thus,
\[
\vert z(x_i,t_j)  \vert \leq C N^{-1}, \quad  x_i \notin (d-\sigma_1,d] \cup[d, d+\sigma _2).
\]
In addition, from Theorem \ref{thm4}  it  also follows that
 \begin{equation}\label{outside-layer-bnd}
\vert Z (x_i,t_j) \vert \leq CN^{-1}, \ x_i  \notin (d-\sigma_1,d] \cup[d, d+\sigma _2).
\end{equation}
 Then, using the triangular inequality estimate~\eqref{z-bnd} follows when $x_i  \notin (d-\sigma_1,d] \cup[d, d+\sigma _2).$  Hence we only now need to consider the error in the internal fine mesh.
Within the fine mesh $\vert \kappa(x_i,t_j) \vert \leq C \sigma _1$ and so
 for $ x_i \in (d-\sigma_1, d+\sigma _2)$,
\begin{align*}
\vert L^{N,M} \bigl( Z - z \bigr) (x_i,  t_j) \vert &\leq  CN^{-1}  \ln N +CM^{-1} +C\sqrt{\ve} (\sqrt{t_j} -\sqrt{t_{j-1}}) , \ x_i \neq d; \\
\vert L^{N,M} \bigl( Z - z \bigr) (d,  t_j) \vert &\leq C \frac{N^{-1}\ln N}{\sqrt{\ve}}.
\end{align*}
Consider the piecewise linear barrier function, $B(x_i)$ defined by
\[
B(d-\sigma_1)= B(d+\sigma _2)=0, \quad B(d)=1,
\]
and then we deduce the error bound using the discrete barrier fuction
 \[
C N^{-1}(\ln N)^2 (1+B(x_i))+CM^{-1} (t_j+\sqrt{\ve} \sqrt{t_j}) .
 \]
and the discrete maximum principle.
 \end{proof}

The main result of this paper can now be stated.
\begin{theorem} \label{th_a(x,t)} For sufficiently large $N$ and $M \geq {\mathcal O}(\ln (N))$,
If $Y$ is the solution of (\ref{discrete-problem}) and $y$ is the solution of (\ref{problem3}).
 Then,   the global approximation $\bar Y$ on $\bar Q$ generated by the values of $Y$ on $\bar Q^{N,M}$ and bilinear interpolation, satisfies
\begin{eqnarray*}
\Vert  \bar Y - y \Vert _{[0,1] \times [t_{j-1},t_j]} \leq C(N^{-1} (\ln N)^2 + M^{-1}  \ln M).
\end{eqnarray*}
\end{theorem}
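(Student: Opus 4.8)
The plan is to assemble the final pointwise error bound from the component-wise estimates already proved, together with a bilinear-interpolation bound. First I would decompose the error at the mesh points using the decompositions $y = v + w + 0.5\sum_{i=2}^4 [\phi^{(i)}](d)\frac{(-1)^i}{i!}\psi_i + z$ and $Y = V + W + 0.5\sum_{i=2}^4 [\phi^{(i)}](d)\frac{(-1)^i}{i!}\Psi_i + Z$, and write
\[
(Y - y)(x_i,t_j) = (V+W - (v+w))(x_i,t_j) + 0.5\sum_{i=2}^4 [\phi^{(i)}](d)\frac{(-1)^i}{i!}(\Psi_i - \psi_i)(x_i,t_j) + (Z - z)(x_i,t_j).
\]
The three groups are controlled respectively by \eqref{smooth-bnd}, by \eqref{sing-bnd}, and by Theorem~\ref{thm5}, giving $\Vert Y - y\Vert_{\bar Q^{N,M}} \leq C(N^{-1}(\ln N)^2 + M^{-1}\ln M)$ at the grid points.

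Next I would pass from the discrete bound to the global bound on $\bar Q$ via bilinear interpolation. On a generic mesh rectangle $[x_{i-1},x_i]\times[t_{j-1},t_j]$ one has the standard interpolation estimate
\[
\Vert \bar Y - y\Vert_{[x_{i-1},x_i]\times[t_{j-1},t_j]} \leq \Vert Y - y\Vert_{\bar Q^{N,M}} + C\Bigl( h_i^2 \bigl\Vert y_{xx}\bigr\Vert_{[x_{i-1},x_i]\times[t_{j-1},t_j]} + k \bigl\Vert y_t\bigr\Vert_{[x_{i-1},x_i]\times[t_{j-1},t_j]}\Bigr),
\]
so it remains to check that the interpolation remainder is itself $O(N^{-1}(\ln N)^2 + M^{-1})$ on every rectangle, layer rectangles included. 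For the time direction this is immediate since $k = T/M$ and $\Vert y_t\Vert$ is bounded on each strip away from $t=0$ (and the weakly singular behaviour near $t=0$ contributes only an $O(M^{-1})$ term after using $\sqrt{\ve/t}\le$ harmless bounds, exactly as in the truncation-error estimates for $\psi_m$). For the space direction I would split by component. For $v$: $h_i^2\Vert v_{xx}\Vert \le CN^{-2}$. For the boundary layer $w$: outside $[1-\sigma,1]$ the exponential factor $e^{-\alpha\delta(1-x)/(2\ve)}$ is $\le CN^{-1}$ by the choice of $\sigma$, while inside $[1-\sigma,1]$ the local mesh width is $H = C\ve N^{-1}\ln N/(\alpha\delta)$ and $\Vert w_{xx}\Vert \le C\ve^{-2}$, so $H^2\Vert w_{xx}\Vert \le C N^{-2}(\ln N)^2$. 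The weakly singular terms $\psi_m$ are handled by the bounds from Appendix A in the same way as in \cite[Theorem 1]{arxiv}, yielding $O(N^{-1}\ln N + M^{-1}\ln M)$. For the interior layer $z$: using the bound $|z(x,t)|\le Ce^{-\gamma g(t)(d-x)^2/(4\ve t)}\le Ce^{-|d-x|/(2\sqrt{\ve T})}$ and \eqref{bds-z-c}, outside the fine mesh $(d-\sigma_1,d]\cup[d,d+\sigma_2)$ one has $|z|\le CN^{-1}$ and also $|\bar Y|\le CN^{-1}$ by \eqref{outside-layer-bnd}, while inside the fine mesh the width is $H_1 = C\sqrt{T\ve}N^{-1}\ln N$ (resp.\ with $\delta$) and $\Vert z_{xx}\Vert \le C\ve^{-1}$, so $H_1^2\Vert z_{xx}\Vert \le CN^{-2}(\ln N)^2$.

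Collecting the rectangle-by-rectangle estimates and taking the maximum over all $i$ gives
\[
\Vert \bar Y - y\Vert_{[0,1]\times[t_{j-1},t_j]} \le C\bigl(N^{-1}(\ln N)^2 + M^{-1}\ln M\bigr),
\]
which is the claim. The main obstacle is the interior-layer interpolation estimate near the interface $x=d$: one must be careful that the transmission condition $[\frac{1}{\sqrt g}y_x](d,t)=0$ (rather than $[y_x]=0$) means $y_{xx}$ is genuinely discontinuous across $x=d$, so the bilinear interpolant on the rectangle straddling $x=d$ must be analysed using one-sided bounds on $z$ from $C^{2+\gamma}(\bar Q^-)\cup C^{2+\gamma}(\bar Q^+)$, together with the fact that $z$ itself (and its interpolant) is $O(1)$ there but the two fine subintervals $[d-\sigma_1,d]$ and $[d,d+\sigma_2]$ are each short, so the product of mesh width and second derivative is still $O(N^{-2}(\ln N)^2)$ on each side. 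A secondary point requiring care is confirming that the $M\ge O(\ln N)$ hypothesis is exactly what is needed for $\Psi_1(t_j)$-type barrier factors (and hence Theorems~\ref{thm4}, \ref{thm5}) to remain bounded, so that the constants above are genuinely $\ve$- and $N,M$-uniform.
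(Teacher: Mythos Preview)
Your proposal is correct and follows essentially the same route as the paper: combine the nodal bounds \eqref{smooth-bnd}, \eqref{sing-bnd} and \eqref{z-bnd} via the component decompositions, then pass to the global bound by controlling the bilinear interpolation error of $y$ component by component. The paper compresses your explicit interpolation analysis into citations of \cite[Theorem~3.12]{fhmos} and \cite[Lemma~4.1]{styor4}, noting in particular that the latter requires only a uniform bound on the \emph{first} time derivative of each component of $y$, which is why no care is needed with $y_{tt}$ near $t=0$.

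One small clarification: your worry about a ``rectangle straddling $x=d$'' is unnecessary, because $x=d$ is itself a mesh point ($x_{N/2}$). Every mesh rectangle therefore lies entirely in $\bar Q^-$ or entirely in $\bar Q^+$, and the one-sided regularity $z\in C^{2+\gamma}(\bar Q^-)\cup C^{2+\gamma}(\bar Q^+)$ suffices directly, with no special argument at the interface.
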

\begin{proof}
By combining the bounds in  (\ref{smooth-bnd}), (\ref{sing-bnd}) and (\ref{z-bnd}), the error bound is established at the nodes  of the mesh $\bar Q^{N,M}$. In order to extend to the global error bound, combine the arguments in \cite[Theorem 3.12]{fhmos} with the interpolation bounds in \cite[Lemma 4.1]{styor4} and the bounds on the derivatives of the components $ v,  w, z $. Note that from \cite[Lemma 4.1]{styor4}, we only require the first time derivative of any component of  $y$ to be uniformly bounded.
 \end{proof}

\section{Modifications when source term  is present}\label{source-term}

Here we outline the modifications to the method and to the analysis when $b > 0$. The problem is (\ref{problem3a}), but the differential equation
(\ref{a(x,t)}) is replaced with
\begin{equation} \label{de+source}
-\ve \hat u_{ss} + \hat a \hat u_s +\hat b \hat u+\hat u_t = \hat f, \quad (s,t) \in (0,1) \times (0,T].
\end{equation}
In addition to all of the constraints imposed in  (\ref{problem3a}), we also assume that $\hat b \in   C^{2+\gamma} (\bar{\hat Q} ), \  \hat b\geq 0 $
and the additional constraint  $\hat b_s(d,0)=\hat b_{ss}(d,0)=0$.
As before,  $\Gamma ^*$ is  defined by
$
d'(t)=\hat a(d(t),t), \, d(0)=d.
$
The operator $\hat L_d$, given in   (\ref{Lddef}),  is redefined as
\[
\hat L_d \hat F := -\ve \hat F _{ss} +\hat a(d(t),t) \hat F_s +\hat b(d(t),t) \hat F +\hat F_t
\]
and we  introduce a new function
\[
I(t) := e^{-\int _{r=0}^t \hat b(d(r),r) \ dr }.
\]
Then $
\hat L_d (I \hat \psi _i )=0, i=0,1,2,3,4.
$
 We  redefine the function (\ref{def-y}) to be
\begin{align*}
\hat y (s,t) & := \hat u(s,t)   - 0.5 [\phi ](d) I(t) \hat \psi _0 (s,t); \quad \hbox{where} \\
\hat L\hat y & = \hat f+ 0.5 [\phi ](d) I(t) \bigl((\hat a(d(t),t)- \hat a(s,t) ) \frac{\partial \hat \psi _0}{\partial s} + (\hat b(d(t),t)- \hat b(s,t) ) \hat \psi _0 \bigr)
\end{align*}
The changes in the transformed problem (\ref{problem3})  are: Find $y$ such that
\begin{subequations} \label{de-problem3}
\begin{align}
{\cal{L}}y
& = g\left( f+ 0.5 [\phi ](d)  \frac{(a(d,t)- a(x,t))}{\sqrt{\ve \pi t}}{ I(t)}  e^{-\frac{g(x,t)(x-d)^2}{4\ve t}}\right) \nonumber \\
& +  0.5 [\phi ](d) \bigl(b(d,t)- b(x,t)\bigr) g(x,t) I(t) \psi _0(x,t) \\
{\cal{L}}y & :=  -\ve y_{xx} +\kappa (x,t)y_x+ g(x,t)(b(x,t)y+ y_t), \\
y(p,t) & =  - 0.5 [\phi ](d) I(t) \psi _0 (p,t),\ p=0,1, \  0 < t \leq T.
\end{align}
\end{subequations}
The discrete problem is defined as in (\ref{discrete-problem}).
In the proof of Theorem \ref{th_z_component}, the presence of the source term will only effect the discussion of the regularity of the component $z_p(x,t)$ in  Appendix C.
 In addition, the component $z_R$ is in the space  $C^{4+\gamma}(\bar Q^-)\cup C^{4+\gamma}(\bar Q^+)$, due to the  additional constraint imposed on $\hat b$, and then  the bounds (\ref{bds-zr}) are also satisfied.
Consequently, the  proof of Theorem \ref{th_a(x,t)} will still apply.

\section{ Numerical results} \label{sec:numerical}

In this section we present numerical results for two test examples.
The exact solution of both examples are unknown. We estimate the orders of global convergence $P_\ve^{N,M}$ and the orders of global parameter-uniform convergence $P^{N,M}$ using the two-mesh method \cite[Chapter 8]{fhmos}:
For each $\ve \in S:= \{2^0,2^{-1},\ldots,2^{-26}\}$, compute the solutions $Y^{N,M}$ and $Y^{2N,2M}$  with~\eqref{discrete-problem} on the Shishkin meshes $\bar Q^{N,M}$ and $\bar Q^{2N,2M}$. Then,
 calculate the maximum two-mesh global differences
$$
D^{N,M}_\ve:= \Vert \bar Y^{N,M}-\bar Y^{2N,2M}\Vert _{\bar Q^{N,M} \cup \bar Q^{2N,2M}}, \  \forall \ve \in S;
$$
where  $\bar Y^{N,M}$  denotes the bilinear interpolation of the discrete solution $Y^{N,M}$  on the mesh $\bar Q^{N,M}.$  For each $\ve \in S$ the orders of global  convergence $ P^{N,M}_\ve$ are estimated by
$$
 P^{N,M}_\ve:=  \log_2\left (\frac{D^{N,M}_\ve}{D^{2N,2M}_\ve} \right), \  \forall \ve \in S.
$$
The uniform  two-mesh global differences $D^{N,M}$ and the uniform orders of global convergence $ P^{N,M}$ are calculated by
$$
D^{N,M}:= \max_{\ve \in S} D^{N,M}_\ve, \quad  P^{N,M}:=  \log_2\left ( \frac{D^{N,M}}{D^{2N,2M}} \right).
$$

 \begin{example} \label{ex2}
  Consider the following  test problem
 \[
\begin{array}{l}
-\ve  \hat u_{ss} + \hat a(s,t) \hat u_s + \hat u_t= 4s(1-s)t+t^2, \quad (x,t) \in (0,1)\times (0,0.5], \\
 \hat u(s,0)=-2, 0 \leq x < 0.2, \quad   \hat u(s,0)=1, \ 0.2 \leq x \leq 1,         \\
 \hat u(0,t)=-2, \quad    \hat u(1,t) =1, \ 0 < t \leq 0.5,
\end{array}
\]
where
\[
\hat a(s,t)=(0.9^2-(s-0.2)^2)/4.
\]
\end{example}
Note that $\hat a_s(d,0)=0.$ The characteristic curve is
\[
 \hat d(t)=\frac{1.1-0.7e^{-9t/20}}{1+e^{-9t/20}}.
\]
In~\cite{arxiv} it is proved that the co-ordinate transformation~\eqref{map} is not needed in order to obtain a global approximation when $\hat a$ only depends on the variable $t.$ Hence, we first examine if this transformation is needed if $\hat a=\hat a(s,t)$. In Table~\ref{tb:ex2-NoTransform}, we see that, without the mapping, the method is not  parameter-uniform.

\begin{table}[h]
\caption{ Example~\ref{ex2}: Maximum two-mesh global differences and orders of convergence
using the scheme from~\cite{arxiv},  where the co-ordinate transformation ~\eqref{map} is not used}
\begin{center}{\tiny \label{tb:ex2-NoTransform}
\begin{tabular}{|c||c|c|c|c|c|c|c|}
 \hline  & N=M=32 & N=M=64 & N=M=128 & N=M=256 & N=M=512 & N=M=1024 & N=M=2048\\
 \hline $D^{N,M}$
&4.422E-02 &4.546E-02 &1.531E-02 &3.916E-02 &1.966E-02 &4.448E-02 &1.328E-02 \\
$P^{N,M}$ &-0.040&1.570&-1.355&0.994&-1.178&1.744&\\ \hline \hline
\end{tabular}}
\end{center}
\end{table}

Example~\ref{ex2} is now approximated with the numerical scheme~\eqref{discrete-problem} proposed in this paper. The computed approximations to $y$ and $\hat u$ are displayed in Figure~\ref{fig:ex2-solutions} and the maximum two-mesh global differences are given in Table~\ref{tb:ex2}. These numerical results are in agreement with Theorem~\ref{th_a(x,t)}.

 \begin{figure}[h!]
\centering
\resizebox{\linewidth}{!}{
	\begin{subfigure}[Approximation to $y$]{
		\includegraphics[scale=0.5, angle=0]{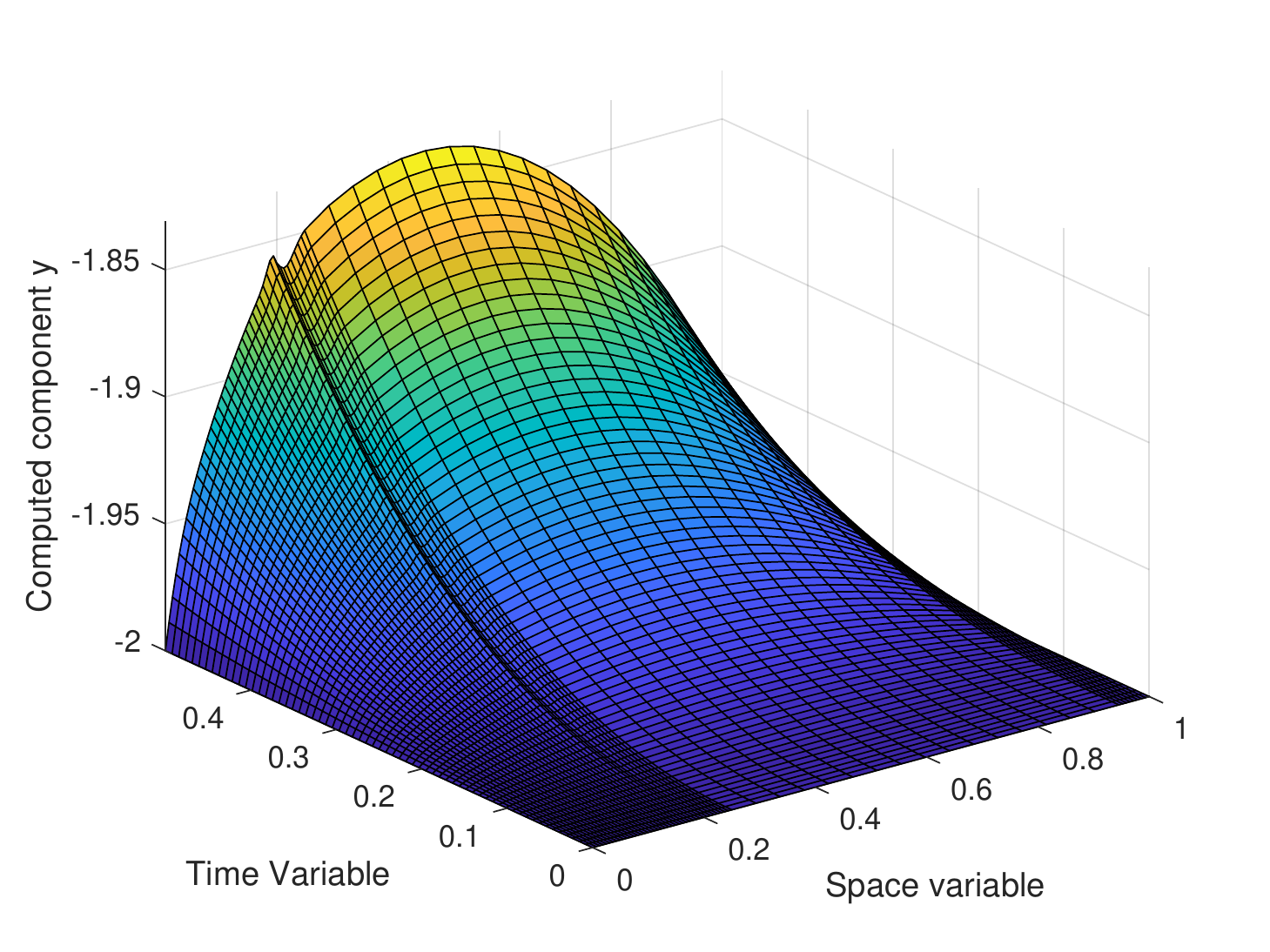}
		}
    \end{subfigure}
\begin{subfigure}[Approximation to $\hat u$]{
		\includegraphics[scale=0.5, angle=0]{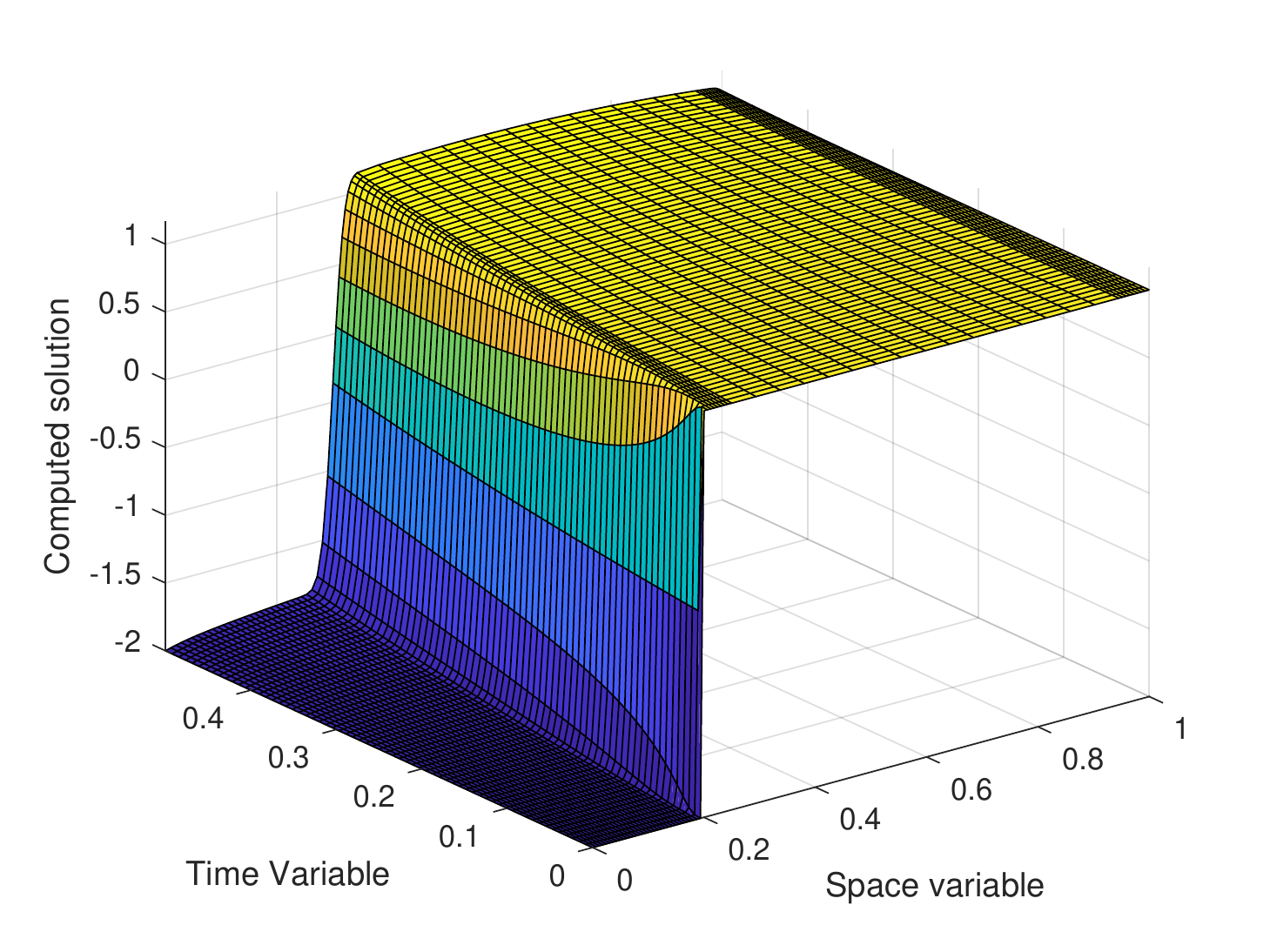}
		}
	\end{subfigure}
}
	\caption{Example~\ref{ex2}: Numerical approximations to $y$ and $\hat u$ with $\vr=2^{-12}$ and $N=M=64$}
	\label{fig:ex2-solutions}
 \end{figure}

\begin{table}[h]
\caption{Example~\ref{ex2}: Uniform two-mesh global differences and orders of convergence using the numerical method ~\eqref{discrete-problem}}% for the function $y_1$ }
\begin{center}{\tiny \label{tb:ex2}
\begin{tabular}{|c||c|c|c|c|c|c|c|}
 \hline  & N=M=32 & N=M=64 & N=M=128 & N=M=256 & N=M=512 & N=M=1024 & N=M=2048
 \\
\hline \hline $\vr=2^{0}$
&3.503E-02 &{\bf 4.546E-02} &{\bf 1.531E-02} &{\bf 5.169E-03} &{\bf 2.067E-03} &{\bf 1.005E-03} &{\bf 4.955E-04} \\
&-0.376&1.570&1.567&1.322&1.041&1.020&
\\ \hline $\vr=2^{-1}$
&{\bf 4.422E-02} &1.495E-02 &5.041E-03 &2.017E-03 &9.795E-04 &4.827E-04 &2.396E-04 \\
&1.564&1.569&1.322&1.042&1.021&1.010&
\\ \hline $\vr=2^{-2}$
&1.426E-02 &4.795E-03 &1.927E-03 &9.318E-04 &4.585E-04 &2.274E-04 &1.132E-04 \\
&1.573&1.315&1.048&1.023&1.012&1.006&
%\\ \hline $\vr=2^{-3}$
%&4.359E-03 &1.778E-03 &8.884E-04 &4.452E-04 &2.229E-04 &1.115E-04 &5.577E-05 \\
%&1.294&1.001&0.997&0.998&0.999&1.000&
\\ \hline $\vr=2^{-4}$
&1.986E-03 &7.580E-04 &3.886E-04 &1.967E-04 &9.897E-05 &4.964E-05 &2.486E-05 \\
&1.390&0.964&0.982&0.991&0.996&0.998&
%\\ \hline $\vr=2^{-5}$
%&5.082E-03 &1.590E-03 &5.988E-04 &2.998E-04 &1.500E-04 &7.502E-05 &3.752E-05 \\
%&1.676&1.409&0.998&0.999&1.000&1.000&
\\ \hline $\vr=2^{-6}$
&8.317E-03 &3.022E-03 &9.091E-04 &3.251E-04 &1.625E-04 &8.126E-05 &4.063E-05 \\
&1.461&1.733&1.483&1.000&1.000&1.000&
%\\ \hline $\vr=2^{-7}$
%&1.227E-02 &5.376E-03 &1.802E-03 &5.217E-04 &1.675E-04 &8.378E-05 &4.189E-05 \\
%&1.190&1.577&1.789&1.639&1.000&1.000&
\\ \hline $\vr=2^{-8}$
&1.610E-02 &8.733E-03 &3.419E-03 &1.081E-03 &3.076E-04 &1.008E-04 &4.369E-05 \\
&0.882&1.353&1.662&1.813&1.610&1.206&
%\\ \hline $\vr=2^{-9}$
%&1.840E-02 &1.265E-02 &6.041E-03 &2.180E-03 &6.632E-04 &2.212E-04 &8.713E-05 \\
%&0.540&1.067&1.470&1.717&1.584&1.344&
\\ \hline $\vr=2^{-10}$
&1.325E-02 &9.919E-03 &5.841E-03 &2.769E-03 &1.111E-03 &4.467E-04 &1.897E-04 \\
&0.418&0.764&1.077&1.317&1.315&1.236&
%\\ \hline $\vr=2^{-11}$
%&1.110E-02 &7.657E-03 &4.234E-03 &1.935E-03 &7.609E-04 &3.449E-04 &1.754E-04 \\
%&0.536&0.855&1.130&1.346&1.141&0.976&
\\ \hline $\vr=2^{-12}$
&9.178E-03 &5.996E-03 &3.206E-03 &1.437E-03 &6.355E-04 &3.306E-04 &1.718E-04 \\
&0.614&0.903&1.158&1.177&0.943&0.945&
%\\ \hline $\vr=2^{-13}$
%&7.723E-03 &4.912E-03 &2.585E-03 &1.167E-03 &6.056E-04 &3.332E-04 &1.802E-04 \\
%&0.653&0.926&1.147&0.947&0.862&0.887&
\\ \hline $\vr=2^{-14}$
&6.754E-03 &4.265E-03 &2.232E-03 &1.121E-03 &6.165E-04 &3.434E-04 &1.895E-04 \\
&0.663&0.934&0.994&0.863&0.844&0.858&
\\
\hline &\vdots &\vdots &\vdots &\vdots &\vdots &\vdots &\vdots \\
&&&&&&&
%\\ \hline $\vr=2^{-15}$
%&6.151E-03 &3.904E-03 &2.043E-03 &1.103E-03 &6.396E-04 &3.543E-04 &1.981E-04 \\
%&0.656&0.934&0.889&0.786&0.852&0.839&
%\\ \hline $\vr=2^{-16}$
%&5.798E-03 &3.709E-03 &1.945E-03 &1.098E-03 &6.559E-04 &3.649E-04 &2.049E-04 \\
%&0.644&0.932&0.825&0.743&0.846&0.833&
%\\ \hline $\vr=2^{-17}$
%&5.603E-03 &3.607E-03 &1.898E-03 &1.104E-03 &6.657E-04 &3.729E-04 &2.096E-04 \\
%&0.636&0.926&0.782&0.730&0.836&0.831&
%\\ \hline $\vr=2^{-18}$
%&5.501E-03 &3.554E-03 &1.907E-03 &1.117E-03 &6.711E-04 &3.774E-04 &2.122E-04 \\
%&0.630&0.899&0.772&0.735&0.830&0.831&
%\\ \hline $\vr=2^{-19}$
%&5.449E-03 &3.528E-03 &1.911E-03 &1.123E-03 &6.740E-04 &3.798E-04 &2.135E-04 \\
%&0.627&0.884&0.767&0.737&0.827&0.831&
%\\ \hline $\vr=2^{-20}$
%&5.422E-03 &3.514E-03 &1.913E-03 &1.127E-03 &6.755E-04 &3.811E-04 &2.142E-04 \\
%&0.626&0.877&0.764&0.738&0.826&0.831&
%\\ \hline $\vr=2^{-21}$
%&5.409E-03 &3.508E-03 &1.915E-03 &1.128E-03 &6.762E-04 &3.817E-04 &2.146E-04 \\
%&0.625&0.874&0.763&0.739&0.825&0.831&
%\\ \hline $\vr=2^{-22}$
%&5.402E-03 &3.504E-03 &1.915E-03 &1.129E-03 &6.766E-04 &3.820E-04 &2.148E-04 \\
%&0.624&0.872&0.762&0.739&0.825&0.831&
%\\ \hline $\vr=2^{-23}$
%&5.399E-03 &3.503E-03 &1.915E-03 &1.130E-03 &6.768E-04 &3.822E-04 &2.149E-04 \\
%&0.624&0.871&0.762&0.739&0.825&0.831&
\\ \hline $\vr=2^{-24}$
&5.397E-03 &3.502E-03 &1.916E-03 &1.130E-03 &6.769E-04 &3.823E-04 &2.149E-04 \\
&0.624&0.870&0.762&0.739&0.824&0.831&
%\\ \hline $\vr=2^{-25}$
%&5.396E-03 &3.501E-03 &1.916E-03 &1.130E-03 &6.770E-04 &3.823E-04 &2.149E-04 \\
%&0.624&0.870&0.761&0.739&0.824&0.831&
\\ \hline $\vr=2^{-26}$
&5.396E-03 &3.501E-03 &1.916E-03 &1.130E-03 &6.770E-04 &3.823E-04 &2.149E-04 \\
&0.624&0.870&0.761&0.739&0.824&0.831&
\\
 \hline $D^{N,M}$
&4.422E-02 &4.546E-02 &1.531E-02 &5.169E-03 &2.067E-03 &1.005E-03 &4.955E-04 \\
$P^{N,M}$ &-0.040&1.570&1.567&1.322&1.041&1.020&\\ \hline \hline
\end{tabular}}
\end{center}
\end{table}

\begin{example} \label{ex3}
Consider  the test problem
 \[
\begin{array}{l}
-\ve  \hat u_{ss} + (1+s^2) \hat u_s + (s+t) \hat u+ \hat u_t= 4s(1-s)t+t^2, \quad (x,t) \in (0,1)\times (0,0.5], \\
 u(x,0)=-2, 0 \leq x < 0.1, \quad    u(x,0)=1, \ 0.1 \leq x \leq 1,         \\
 u(0,t)=-2, \quad    u(1,t) =1, \ 0 < t \leq 0.5.
\end{array}
\]
\end{example}
Note that the source term is present in this example and then problem~\eqref{de-problem3} is approximated with the numerical method~\eqref{discrete-problem} on the Shishkin mesh $\bar Q^{N,M}.$ For this example, we have
\[
I(t)
=(\cos t-0.1\sin t) e^{-t^2/2}.
\]
In addition, observe that $\hat a_s(d,0) \ne 0$ and $\hat b_s(d,0)\ne 0$.
In Table~\ref{tb:ex3} we see that the numerical approximations   converge with almost first order.

\begin{table}[h]
\caption{ Example~\ref{ex3}: Maximum two-mesh global differences and orders of convergence  using the numerical method ~\eqref{discrete-problem}}
\begin{center}{\tiny \label{tb:ex3}
\begin{tabular}{|c||c|c|c|c|c|c|c|}
 \hline  & N=M=32 & N=M=64 & N=M=128 & N=M=256 & N=M=512 & N=M=1024 & N=M=2048 \\
\hline \hline $\vr=2^{0}$
&1.978E-01 &6.835E-02 &3.224E-02 &4.361E-02 &1.478E-02 &4.979E-03 &1.984E-03 \\
&1.533&1.084&-0.436&1.561&1.570&1.328&
%\\ \hline $\vr=2^{-1}$
%&6.764E-02 &2.849E-02 &4.066E-02 &1.393E-02 &4.676E-03 &1.856E-03 &9.003E-04 \\
%&1.247&-0.513&1.546&1.575&1.333&1.044&
\\ \hline $\vr=2^{-2}$
&2.516E-02 &3.521E-02 &1.235E-02 &4.115E-03 &1.624E-03 &7.870E-04 &3.880E-04 \\
&-0.485&1.511&1.585&1.341&1.045&1.020&
%\\ \hline $\vr=2^{-3}$
%&6.665E-02 &2.829E-02 &1.309E-02 &6.313E-03 &3.103E-03 &1.539E-03 &7.665E-04 \\
%&1.236&1.112&1.052&1.025&1.012&1.006&
\\ \hline $\vr=2^{-4}$
&1.631E-01 &7.441E-02 &3.434E-02 &1.690E-02 &8.342E-03 &4.147E-03 &2.069E-03 \\
&1.132&1.116&1.023&1.018&1.008&1.003&
%\\ \hline $\vr=2^{-5}$
%&3.093E-01 &1.688E-01 &7.659E-02 &3.824E-02 &1.890E-02 &9.428E-03 &4.709E-03 \\
%&0.874&1.140&1.002&1.016&1.004&1.001&
\\ \hline $\vr=2^{-6}$
&3.309E-01 &2.423E-01 &{\bf 1.616E-01} &7.870E-02 &3.916E-02 &1.960E-02 &9.822E-03 \\
&0.449&0.585&1.038&1.007&0.998&0.997&
%\\ \hline $\vr=2^{-7}$
%&3.195E-01 &2.335E-01 &1.453E-01 &7.689E-02 &4.330E-02 &2.499E-02 &1.392E-02 \\
%&0.452&0.685&0.918&0.828&0.793&0.844&
\\ \hline $\vr=2^{-8}$
&3.127E-01 &2.284E-01 &1.421E-01 &7.545E-02 &4.301E-02 &{\bf 2.506E-02} &1.398E-02 \\
&0.453&0.684&0.913&0.811&0.780&0.842&
%\\ \hline $\vr=2^{-9}$
%&3.097E-01 &2.262E-01 &1.407E-01 &7.474E-02 &4.276E-02 &2.498E-02 &1.393E-02 \\
%&0.453&0.685&0.913&0.806&0.776&0.842&
\\ \hline $\vr=2^{-10}$
&3.347E-01 &2.251E-01 &1.399E-01 &7.839E-02 &4.277E-02 &2.472E-02 &1.390E-02 \\
&0.572&0.686&0.836&0.874&0.791&0.830&
%\\ \hline $\vr=2^{-11}$
%&3.495E-01 &2.330E-01 &1.432E-01 &8.136E-02 &4.373E-02 &2.474E-02 &1.397E-02 \\
%&0.585&0.702&0.816&0.896&0.822&0.824&
\\ \hline $\vr=2^{-12}$
&{\bf 3.583E-01} &2.367E-01 &1.448E-01 &8.242E-02 &4.451E-02 &2.484E-02 &1.403E-02 \\
&0.598&0.709&0.813&0.889&0.842&0.824&
%\\ \hline $\vr=2^{-13}$
%&3.550E-01 &2.408E-01 &1.461E-01 &8.346E-02 &4.525E-02 &2.489E-02 &1.407E-02 \\
%&0.560&0.721&0.808&0.883&0.862&0.824&
\\ \hline $\vr=2^{-14}$
&3.460E-01 &{\bf 2.450E-01} &1.477E-01 &8.420E-02 &4.566E-02 &2.492E-02 &1.408E-02 \\
&0.498&0.730&0.811&0.883&0.873&0.824&
%\\ \hline $\vr=2^{-15}$
%&3.447E-01 &2.429E-01 &1.500E-01 &8.476E-02 &4.588E-02 &2.494E-02 &1.409E-02 \\
%&0.505&0.695&0.824&0.885&0.879&0.824&
\\ \hline $\vr=2^{-16}$
&3.071E-01 &2.240E-01 &1.523E-01 &{\bf 8.540E-02} & {\bf 4.605E-02} &2.495E-02 &1.410E-02 \\
&0.455&0.557&0.834&0.891&0.884&0.824&
\\
\hline &\vdots &\vdots &\vdots &\vdots &\vdots &\vdots &\vdots \\
&&&&&&&
%\\ \hline $\vr=2^{-17}$
%&3.077E-01 &2.241E-01 &1.391E-01 &7.424E-02 &4.400E-02 &2.495E-02 &1.410E-02 \\
%&0.458&0.688&0.905&0.755&0.818&0.824&
%\\ \hline $\vr=2^{-18}$
%&3.084E-01 &2.242E-01 &1.391E-01 &7.424E-02 &4.297E-02 &2.495E-02 &1.410E-02 \\
%&0.460&0.689&0.905&0.789&0.784&0.824&
%\\ \hline $\vr=2^{-19}$
%&3.093E-01 &2.243E-01 &1.391E-01 &7.424E-02 &4.297E-02 &2.496E-02 &1.410E-02 \\
%&0.464&0.689&0.906&0.789&0.784&0.824&
%\\ \hline $\vr=2^{-20}$
%&3.098E-01 &2.244E-01 &1.391E-01 &7.423E-02 &4.297E-02 &2.495E-02 &1.410E-02 \\
%&0.465&0.690&0.906&0.789&0.784&0.824&
%\\ \hline $\vr=2^{-21}$
%&3.101E-01 &2.245E-01 &1.391E-01 &7.423E-02 &4.296E-02 &2.495E-02 &1.410E-02 \\
%&0.466&0.691&0.906&0.789&0.784&0.824&
%\\ \hline $\vr=2^{-22}$
%&3.105E-01 &2.247E-01 &1.391E-01 &7.423E-02 &4.296E-02 &2.495E-02 &1.410E-02 \\
%&0.467&0.692&0.906&0.789&0.784&0.823&
%\\ \hline $\vr=2^{-23}$
%&3.108E-01 &2.247E-01 &1.391E-01 &7.423E-02 &4.296E-02 &2.495E-02 &1.410E-02 \\
%&0.468&0.692&0.906&0.789&0.784&0.823&
\\ \hline $\vr=2^{-24}$
&3.111E-01 &2.249E-01 &1.391E-01 &7.423E-02 &4.296E-02 &2.495E-02 &1.410E-02 \\
&0.468&0.693&0.906&0.789&0.784&0.823&
%\\ \hline $\vr=2^{-25}$
%&3.114E-01 &2.248E-01 &1.391E-01 &7.423E-02 &4.296E-02 &2.495E-02 &1.410E-02 \\
%&0.470&0.692&0.906&0.789&0.784&0.823&
\\ \hline $\vr=2^{-26}$
&3.117E-01 &2.248E-01 &1.391E-01 &7.423E-02 &4.296E-02 &2.495E-02 &{\bf 1.410E-02} \\
&0.471&0.692&0.906&0.789&0.784&0.823&
\\ \hline $D^{N,M}$
&3.583E-01 &2.450E-01 &1.616E-01 &8.540E-02 &4.605E-02 &2.506E-02 &1.410E-02 \\
$P^{N,M}$ &0.548&0.601&0.920&0.891&0.878&0.829&\\ \hline \hline
\end{tabular}}
\end{center}
\end{table}

\section{Appendix A: A set of singular functions}

 In this appendix the singular functions~$\hat \psi_i, \, i=0,1,2,3,4$
 are defined and bounds of their derivatives are given. These functions are the main terms in the regularity  expansion~\eqref{expansion2} of the continuous solution $\hat u(s,t)$. These bounds are used in the truncation error analysis of the interior layer component $z$.

For any function $F(x,t)=\hat F(s,t)$ we have
\begin{align*}
{\cal{L}} F &= -\ve  F_{xx} + \kappa F _x +g F_t =
 -\ve g\hat F_{ss} + \left(\sqrt{g} \kappa + g \frac{\partial s}{\partial t} \right)\hat F _s +g\hat F_t \\
&= g \hat L_d \hat F +g \left(\frac{\kappa}{\sqrt{g}}  +  \frac{\partial s}{\partial t} -a(d,t) \right)\hat F _s,
\end{align*}
where
\begin{equation}\label{Lddef}
\hat L_d \hat F  := -\ve \hat F_{ss} + \hat a(d(t),t) \hat F _s +\hat F_t.
\end{equation}
Hence, from (\ref{map}), (\ref{coeff-problem3})  and using $\hat a(d(t),t)=a(d,t)$ we have
\begin{equation}\label{handy}
{\cal{L}} F(x,t) = g \hat L_d \hat F + \sqrt{g} (a(x,t)-a(d,t)) \frac{\partial F}{\partial x}.
\end{equation}

 We will define a set of functions $\{ \hat \psi _i \} _{i=0}^4$ such that $\hat L_d \hat \psi _i=0$; $ \hat \psi _i \in C^{i-1} (\bar {\hat  Q}), \ i \geq 1$. Each function $\hat \psi _i$ is smooth within the open region $ \hat Q \setminus \Gamma ^*$. } Define the two singular functions~\cite{bobisud}
\begin{equation}\label{zero}
\hat \psi _0(s,t) := \erfc \left ( \frac{d(t)-s}{2\sqrt{\ve t}} \right) ,\quad \hat E(s,t) := e^{-\frac{(s-d(t))^2}{4\ve t}}.
\end{equation}
Then we explicitly write out the derivatives of these two functions
\begin{align*}
\frac{\partial \hat \psi _0}{\partial s} &=  \frac{1}{\sqrt{\ve \pi t}} \hat E  , \quad  \frac{\partial \hat E}{\partial s} =  \frac{d(t)-s}{2\ve t} \hat E,\quad \frac{\partial \hat E}{\partial t}  = \frac{\sqrt{ \pi }(d(t)-s)}{2\sqrt{\ve  t}}\frac{\partial \hat \psi _0}{\partial t};\\
\ve \frac{\partial ^2\hat \psi _0}{\partial s ^2} &=  \frac{d(t)-s}{2t\sqrt{\ve \pi t}} \hat E, \quad \frac{\partial \hat \psi _0}{\partial t} =  \frac{1}{\sqrt{\ve \pi t}}  \left(\frac{(d(t)-s)}{2t}- \hat a(d(t),t) \right)\hat E.
\end{align*}
Hence,  we have that
$
\hat L_d \hat \psi _0=0.
$
Observe that
\begin{align*}
\hat L_d ((d(t)-s)\hat \psi _0) &=2\ve \frac{\partial \hat \psi _0}{\partial s}= 2\frac{\sqrt{\ve}}{\sqrt{\pi t}}\hat E; \quad
\hat L_d \hat E=\frac{1}{2t}  \hat E
\\ \hbox{and} \quad
\hat L_d (t^{n+0.5}\hat E) &=(n+1) t^{n-0.5} \hat E \quad \hbox{for all } n \geq 0.
\end{align*}
We now  define the remaining weakly singular functions:
\begin{subequations}
\begin{eqnarray}
\hat \psi _1(s,t) := (d(t)-s)\hat \psi _0 -2\frac{\sqrt{\ve t}}{\sqrt{\pi}} \hat E, \label{one} \\
\hat \psi _i = (d(t)-s) \hat \psi _{i-1}+2\ve t (i-1) \hat \psi _{i-2},\qquad  i=2,3,4;\label{singular-functions}
\end{eqnarray}
\end{subequations}
 which satisfy
\begin{align*}
\frac{\partial \hat \psi _i}{\partial s} &= -i \hat \psi _{i-1},\quad \hat L_d \hat \psi _i= 0, \qquad  i=1,2,3,4 ;\\
\frac{(-1)^i}{i!} \Bigl[  \frac{\partial ^i \hat \psi _i}{\partial s ^i } \Bigr](d,0) &=2, \qquad i=0,1,2,3,4;\\
(d(t)-s)\frac{\partial \hat \psi _i}{\partial s} &= -i \hat \psi _{i}+2\ve t i(i-1) \hat \psi _{i-2} \in C^{i-1} (\bar {\hat  Q}),\qquad  i=2,3,4.
\end{align*}
Define the  parameterized exponential function
\[
\hat E_\gamma (s,t) := e^{-\frac{\gamma (s-d(t))^2}{4\ve t}},\qquad  0 < \gamma < 1.
\]
 Using the inequality $\erfc(z) \leq C e^{-z^2} \leq C e^{\gamma ^2/4}e^{-\gamma z}, \,  \forall z \ge 0,$
it follows that
\begin{equation}\label{bound-a(t)}
 \Bigl \vert \frac{\partial ^j }{\partial t ^j } \hat \psi _0(s,t)\Bigr \vert, \Bigl \vert \frac{\partial ^j }{\partial t ^j } \hat E(s,t)\Bigr \vert   \leq  C
 \left( \frac{1}{t} +\frac{1}{\sqrt {\ve t}}\right)^j\hat E_\gamma (s,t);\quad j=1,2;
\end{equation}

 Based on the map (\ref{map}) and the definition of the function $g$ (\ref{def-g}) we have
\[
d(t)-s =  \sqrt{g} (d-x) \quad \hbox{and} \quad  \frac{\partial \psi _i }{\partial x} (x,t) =
\sqrt{g} \frac{\partial \hat \psi _i }{\partial s } (s,t).
\]
In the transformed domain, the two fundamental functions are:
\begin{eqnarray*}%\label{zero-trans}
 \psi _0(x,t) := \erfc \left( \frac{\sqrt{g(t)}(d-x)}{2\sqrt{\ve t}} \right) ,\qquad  E(x,t) := e^{-\frac{g(t)(x-d)^2}{4\ve t}}.
\end{eqnarray*}
It follows that
\begin{align*}
& \Bigl \vert \frac{\partial ^j }{\partial t ^j }  \psi _0(x,t)\Bigr \vert ,
 \Bigl \vert \frac{\partial ^j }{\partial t ^j }  E(x,t)\Bigr \vert    \leq C
 \left( 1 +\frac{ 1}{ t}\right)^j E_\gamma (x,t), \quad j=1,2, \quad x\neq d; 
 \\
& \vert \psi _0(x,t) \vert \leq C, \
\Bigl \vert \frac{\partial ^i }{\partial x ^i }  \psi _0(x,t)\Bigr \vert  , \Bigl \vert \frac{\partial ^i }{\partial x ^i }E(x,t)\Bigr \vert    \leq
C\left(\frac{1}{\sqrt {\ve t}}\right) ^{i} E_\gamma (s,t), \quad 1\leq i \leq 4.
\end{align*}
Observe that the bounds on the time derivatives of these two functions do not depend adversely on the singular perturbation parameter $\ve$. This contrasts with the bounds on the time derivatives of these functions in the original variables $(s,t)$.

In the transformed variables, we  see from (\ref{handy}) that
\[
{\cal{L}}\psi _i = \sqrt{g} (a(x,t)-a(d,t)) \frac{\partial \psi _i}{\partial x}\neq 0,\quad \hbox{for} \quad x \neq d.
\]
 The fact that ${\cal{L}}\psi _i\ne 0$, when $\hat a$ depends on the spatial variable, results in the function $\hat y$ exhibiting an interior layer (see Remark~\ref{rem:z}.)

The next  singular function is
\begin{subequations}\begin{eqnarray}\label{one-def}
 \psi _1(x,t) :=  \sqrt{g}(d-x) \psi _0 -2\frac{\sqrt{\ve t}}{\sqrt{\pi}} E \quad \hbox{and} \quad \frac{\partial \psi_1}{\partial x }   =-\sqrt{g} \psi _0
\end{eqnarray}
and the subsequent three functions\footnote{The functions $ \psi _1, \psi _2 $ were defined earlier by Shishkin in \cite[(4.8c)]{shishkin4}  and Bobisud in~\cite{bobisud}}  are
\begin{equation}\label{two+three-def}
\psi _i(x,t)  := \sqrt{g}(d-x) \psi _{i-1} +2\ve t (i-1)\psi _{i-2}, \quad i=2,3,4.
\end{equation}
\end{subequations}
As the first space derivatives of these functions are involved in the analysis of the interior layer function, we explicitly record that
\[
\frac{\partial \psi _n}{\partial x} =-i\sqrt{g} \psi _{n-1}, \, n=2,3,4  \ \hbox{ and } \ (d-x)^it^j \psi _0 \in C^{2+\gamma}(\bar Q),  \hbox{ if }  i+2j \geq 3.
\]
For these singular functions\footnote{For $n=0,1,2,3,4,\  \psi _n(x,0) = 2(d-x)^n,\ x > d; \ \psi _n(x,0) =0,\  x < d. $}, we can  establish the bounds
\begin{align*}
   \Bigl \vert \frac{\partial ^j }{\partial t ^j }  \psi _1(x,t)\Bigr \vert & \leq C
 \left( 1 +\frac{ \sqrt{\ve}}{ \sqrt{t}}\right)^j E_\gamma +C , \quad j=1,2;\\
   \Bigl \vert \frac{\partial  }{\partial x  }  \psi _1(x,t)\Bigr \vert     & \leq C, \quad
\Bigl \vert \frac{\partial ^i }{\partial x ^i }  \psi _1(x,t)\Bigr \vert      \leq
C\left(\frac{1}{\sqrt {\ve t}}\right) ^{i-1} E_\gamma +C, \quad  2 \leq i \leq 4,
\end{align*}
and
\[
\Bigl \vert \frac{\partial  }{\partial t  }  \psi _n(x,t)\Bigr \vert \leq C,\quad \Bigl \vert \frac{\partial ^2 }{\partial x ^2 }  \psi _n(x,t)\Bigr \vert \leq  C; \quad n=2,3,4;
\]
 on the second time derivatives
\begin{align*}
\Bigl \vert \frac{\partial ^2 }{\partial t ^2 }  \psi _2(x,t)\Bigr \vert  & \leq
C \left (1+ \sqrt{\frac{\ve}{t}}\right)^2 E_\gamma (x,t)+C, \\
\Bigl \vert \frac{\partial ^2 }{\partial t ^2 }  \psi _3(x,t)\Bigr \vert   & \leq
C   \left (1+ \ve \sqrt{\frac{\ve}{t}}\right) E_\gamma (x,t)+C;\quad
\Bigl \vert \frac{\partial ^2 }{\partial t ^2 }  \psi _4(x,t)\Bigr \vert   \leq
C   E_\gamma (x,t)+C;
\end{align*}
 on the fourth space derivatives
\begin{eqnarray*}
 \Bigl \vert \frac{\partial ^4 }{\partial x ^4 }  \psi _j(x,t)  \Bigr \vert \leq C(\sqrt{\ve t})^{j-4} E_\gamma (x,t)+C, \quad j=2,3,4;
\end{eqnarray*}
 and on the third space derivatives
\begin{eqnarray*}
 \Bigl \vert \frac{\partial ^3 }{\partial x ^3 }  \psi _2(x,t)  \Bigr \vert \leq  C
 \left( 1 +\frac{1}{\sqrt {\ve t}}\right)  E_\gamma (x,t)+C; \quad
\Bigl \vert \frac{\partial ^3 }{\partial x ^3 } \psi _n(x,t)\Bigr \vert    \leq  C, \quad n=3,4.
\end{eqnarray*}
 One can check that for all $m \geq 0, n \geq 1$
\begin{subequations}
\begin{align}
&\hat L_d(t^{m+0.5}\hat E)=(m+1) t^{m-0.5} \hat E \label{express0}\\
&\hat L_d (t^m(d(t)-s)^n\hat \psi _0)  \nonumber \\
&\hspace{2cm} =\bigl(mt^{m-1} (d(t)-s)^n  -\ve n(n-1)t^{m} (d(t)-s)^{n-2}\bigr)\hat \psi _0 \nonumber \\
&\hspace{2cm} +2\sqrt{\frac{\ve }{\pi t}} nt^{m} (d(t)-s)^{n-1}  \hat E \label{express1}
\\
&\hat L _d(\sqrt{t}(d(t)-s)^n \hat E) \nonumber \\
& \hspace{2cm} =\left( \frac{(n+1)(d(t)-s)^{n}}{\sqrt{ t}} - \ve n(n-1)\sqrt{ t}(d(t)-s)^{n-2}\right) \hat E \label{express2}\\
&\hat L_d (t\sqrt{t}(d(t)-s) \hat E)=3(d(t)-s)\sqrt{ t} \hat E \label{express3}.
\end{align}
\end{subequations}
These expressions will be used to deduce bounds for the component $z_p$   in the decomposition~\eqref{decomposition_z} of $z$.

 In addition, we assume that $a_x(d,0)=0$. This  guarantees that the component $z_c$ of $z$ in~\eqref{decomposition_z} satisfies $z_c\in C^{4+\gamma}(\bar Q^-)\cup C^{4+\gamma}(\bar Q^+).$ The regularity of this component comes from observing that $\vert g-1 \vert \leq Ct$ and so \[
(d-x)(g-1), \ t(g-1) \psi _0,  \ (d-x)^2(g-1) \psi _0 \in C^{2+\gamma}(\bar Q).
\]
 Thus, from~(\ref{handy}), \eqref{one-def}, \eqref{two+three-def} and the assumption $a_x(d,0)=0$, we have
\[
{\cal{L}} \psi _i = -ig\bigl(a(d,t)- a(x,t)\bigr) \psi _{i-1}\in C^{2+\gamma}(\bar Q), \ i\geq 2,
\]
which is used in~\eqref{RegZc} in  Appendix C.

\section{Appendix B:  Decomposition of the solution}

In this appendix we decompose the solution of problem~\eqref{problem3a} into a regular $\hat v$, boundary layer $\hat w$ and interior layer $\hat z$ components. Bounds for the derivatives of $\hat v$ and $\hat w$ are established here and the bounds for the component $z$ in Appendix C.

We have the following expansion for the solution of problem (\ref{problem3a}):
\begin{equation}\label{expansion2}
\hat u (s,t) = 0.5 \sum _{i=0}^4 [\phi ^{(i)} ] (d)  \frac{(-1)^i}{i!}  \hat \psi _i (s,t) + \hat R(s,t), \qquad \hat R \in C^{4+\alpha}(\bar {\hat Q});
\end{equation}
and, as we have assumed that  $[\phi ' ] (d)=0 $, then
\[
\hat y (s,t) = 0.5 \sum _{i=2}^4 [\phi ^{(i)} ] (d)  \frac{(-1)^i}{i!}  \hat \psi _i (s,t) + \hat R(s,t).
\]
Note that the smooth  remainder $\hat R$ satisfies the singularly perturbed problem
\begin{align*}
\hat L \hat R &=\hat f - 0.5 \sum _{i=0}^4 [\phi ^{(i)} ] (d)  \frac{(-1)^i}{i!}  \hat L \hat \psi _i (s,t), \ (s,t) \in \hat Q;\\
\hat R(s,0) &= \hat y(s,0) - 0.5 \sum _{i=2}^4 [\phi ^{(i)} ] (d)  \frac{(-1)^i}{i!}  \hat \psi _i (s,0), \ 0 \leq s \leq 1; \\
\hat R(p,t) &= \hat y(p,t) - 0.5 \sum _{i=2}^4 [\phi ^{(i)} ] (d)  \frac{(-1)^i}{i!}  \hat \psi _i (p,t),\quad t >0; \ p=0,1.
\end{align*}
This can be further decomposed as follows
\begin{eqnarray*}
\hat R = \hat v + \hat w + \hat z, \quad \hat v, \hat w \in C^{4+\alpha}( \bar {\hat Q});
\end{eqnarray*}
where
\begin{align*}
\hat L \hat v &= \hat f,  \quad \hat L \hat w = 0, \quad \hat L \hat z = \hat L \hat R  - \hat f; \quad (s,t) \in  \hat Q;\\
\hat v (0,t) &= \hat R (0,t), \quad \hat v(s,0) = \hat R(s,0), \quad \hat v(1,t) = \hat v^*(1,t); \\
\hat w(0,t) &= 0, \quad \hat w(s,0)= 0, \quad \hat w(1,t) = (\hat R -\hat v^*)(1,t); \\
\hat z(0,t) &= 0,\quad  \hat z(s,0) = 0,\quad  \hat z(1,t) = 0.
\end{align*}
As in \cite{dervs-parabolic-conv-diff}, the outflow boundary values for $\hat v$ can be specified (they are denoted by $\hat v^*(1,t)$ above) so that we have the following bounds
\begin{align*}
\left \vert \frac{\partial ^{i+j}   }{\partial s ^i \partial t ^j } \hat v (s,t)\right \vert & \leq  C,\  0 \leq i+j \leq  2, \qquad
\left \vert \frac{\partial ^{3}   }{\partial s ^3  } \hat v(s,t)\right \vert  \leq  C \left(1+\frac1{\ve}\right),  \\
\left \vert \frac{\partial ^{i+j} }{\partial s ^i \partial t ^j } \hat w(s,t)\right \vert  & \leq  C\ve ^{-i}(1+\ve ^{1-j}) e^{-\alpha(1-s)/\ve}, \quad 0 \leq i+2j \leq  4.
\end{align*}

\section{Appendix C: Regularity and bounds on the interior layer function}
To obtain sharp bounds on the derivatives of  the interior layer component $\hat z$, we transform the problem $ \hat L \hat z (s,t)= \hat L \hat R (s,t)-\hat f $ to
the $(x,t)$ coordinate system.
 In this appendix bounds for the two subcomponents $z_c$ and $z_p$ in the decomposition~\eqref{decomposition_z} of $z$ are established. In the case of the component $z_p$, they are established using a further decomposition into two components $z_q$ and $z_R$.
By the definition (\ref{zc-def}) of the subcomponent $z_c$, we have
\begin{align}
{\cal{L}} z_c(x,t)&=- \frac{g}{2}\bigl(a(d,t)- a(x,t)\bigr) \sum _{i=1}^3 [\phi ^{(i+1)} ] (d)  \frac{(-1)^i}{i!}   \psi _i (x,t) \nonumber\\
 &=: F_c(x,t) \in C^{2+\gamma}(\bar Q); \label{RegZc}
\end{align}
and, hence,
\[
z_c \in C^{4+\gamma}(\bar Q^-)\cup C^{4+\gamma}(\bar Q^+).
\]
The  function $z_c$ is sufficiently regular within each sub-domain to allow us use results from \cite{ladyz} to bound the derivatives of $z_c$.
In the stretched variable
\[
\zeta = \frac{x-d}{\sqrt{\ve}},
\]
we have  the bounds
\begin{subequations}\label{bds-zc}
\begin{align}
\vert {\cal{L}}z_c  (\zeta ,t) \vert
&\leq C \sqrt{\ve} e^{-\frac{\gamma g \zeta ^2}{4 t}},
\\
 \Bigl \vert \frac{\partial ^{i+j} ({\cal{L}}z_c  (\zeta ,t) )}{\partial \zeta ^i \partial t ^j } \Bigr \vert &\leq C \vr^{i/2} e^{-\frac{\gamma g \zeta ^2}{4 t}}, \quad 1 \leq i+2j \leq 2.
\end{align}
\end{subequations}
These bounds are used in Theorem~\ref{th_z_component} to deduce estimates for the component $z_c$ and some of its partial derivatives.

We next examine the regularity of the subcomponent $z_p(x,t)$, which is defined as the solution of problem  (\ref{z-def}). From  assumption (\ref{simplify}) we have the following  Taylor expansion
\begin{align*}
&a(d,t)-a(x,t) = p_d(x,t)+r_1(x,t), \\
&p_d(x,t):= - \left[ a_{xx}(d,0)\frac{(d-x)^2}{2!}+a_{xxx}(d,0)\frac{(d-x)^3}{3!}+t(d-x)a_{xt}(d,0) \right] , \\
&r_1(x,t) := K_0(d-x)^4+ K_1t(d-x)^2 +K_2t (d-x)^3+K_3 t^2(d-x).
\end{align*}
Once again, this interior layer component $z_p$  is decomposed into the sum
\begin{align}
z_p(x,t)&:=z_q(x,t)+  z_R(x,t), \label{DecompZP}\\
z_q(x,t)&=  B_1\sqrt{t}(g(d-x)^2+\ve t)E,  \nonumber \\
&+B_2\sqrt{gt}(d-x)(g(d-x)^2+2\ve t )E+B_3t\sqrt{gt}(d-x)E. \label{ZQ}
\end{align}
The constants  $B_1,B_2$ and $B_3$ are given by
\[
B_1 :=- \frac{a_{xx}(d,0)}{3!\sqrt{\ve \pi}}, \ B_2 := \frac{a_{xxx}(d,0)}{4!\sqrt{\ve \pi}},\ B_3 := \frac{a_{xt}(d,0)}{3\sqrt{\ve \pi}}.
\]
Note that $z_q \in C^{2+\gamma}(\bar Q^-)\cup C^{2+\gamma}(\bar Q^+)$ and
\[
z_q(d,t)=\ve B_1t\sqrt{t}, \ [z_q](d,t) =0, \quad \Bigl[\frac{1}{\sqrt{g}} \frac{\partial z_q}{\partial x} \Bigr](d,t) =0, \quad z_q(x,0)=0.
\]
Using that $\erfc(z) \leq C e^{-z^2}, \forall z$, we can  establish the bounds
\begin{subequations}\label{ZQxt}
\begin{align}
\Bigl \vert \frac{\partial ^{i} z _q}{\partial x^i   } (x,t)\Bigr \vert &\leq C(\sqrt{\ve})^{-i}\bigl( \sqrt{\ve }+\vert a_{xt}(d,0)\vert \bigr)E_\gamma (x,t), \quad i=0,1,2,3; \label{ZQx}\\
\Bigl \vert \frac{\partial ^j z _q}{ \partial t ^j} (x,t)\Bigr \vert &\leq C(\vert a_{xt}(d,0)\vert+\sqrt{\ve}(\sqrt{t})^{1-j})E_\gamma (x,t), \quad j=1,2, \label{ZQt} \\
\Bigl \vert \frac{\partial^{2} z _q}{\partial x \partial t   } (x,t)\Bigr \vert & \leq  C(\sqrt{\ve})^{-1}\bigl( \sqrt{\ve }+\vert a_{xt}(d,0)\vert \bigr)E_\gamma (x,t).
\end{align}
\end{subequations}

By the choice of constants  $B_1, B_2, B_3$ and using the expressions (\ref{express0}), (\ref{express2}), (\ref{express3})  (\ref{handy}), we see that  $z_p(x,t)$ satisfies
\begin{align}
{\cal{L}} z_p(x,t)   &= p_d(x,t) \frac{g}{\sqrt{\ve \pi t}} E(x,t)  +r_2(x,t) + {\cal{L}} z_R(x,t), \quad \hbox{where} \nonumber \\
r_2(x,t)&:= ( \tilde p_d- p_d)(x,t)\frac{g}{\sqrt{\ve \pi t}} E(x,t)  +\sqrt{g}(a(x,t)-a(d,t))\frac{\partial z_q}{\partial x} \label{r2}
\end{align}
 and
\[
\tilde p_d(x,t):=- \left[a_{xx}(d,0)\frac{g(d-x)^2}{2!}+a_{xxx}(d,0)\frac{g\sqrt{g}(d-x)^3}{3!}+t\sqrt{g}(d-x)a_{xt}(d,0) \right].
\]
The function $p_d$ and the related function $\tilde p_d$ satisfy
\[
\vert \tilde p_d(x,t) -  p_d(x,t) \vert \leq C\vert (g-1)(d-x) \vert \bigl(\vert d-x \vert +t \vert a_{xt}(d,0) \vert \bigr).
\]
By the definitions (\ref{z-def}) of $z$  and (\ref{zc-def}) of the subcomponent $z_c$ we have
\begin{align*}
{\cal{L}}z_R %= {\cal{L}}\bigl(  -\frac{(0.5 [\phi  ] (d))}{\sqrt{\ve \pi}} z_p\bigr)
&=  r_1(x,t) \frac{g}{\sqrt{\ve \pi t}}E(x,t)  -r_2(x,t)
=: F_R(x,t) \in C^{2+\gamma}(\bar Q); \\
z_R(x,0) &= 0 ; \
[z_R](d,t)=0 , \quad [(z_R)_x](d,t)=0;
\\
z_R(0,t) &= K_4\frac{\sqrt{t}}{\sqrt{\ve }}e^{-\frac{ g d^2}{\ve t}},  z_R(1,t)  =K_5\frac{\sqrt{t}}{\sqrt{\ve }}e^{-\frac{ g (1-d)^2} {\ve t}},
\end{align*}
where the values of $K_4$ and $K_5$ can be obtained from~\eqref{ZQ}. Hence, the  function $z_R$ is sufficiently regular within each sub-domain to allow us use results from \cite{ladyz} to bound the derivatives of $z_R$. That is,
\[
z_R \in C^{4+\gamma}(\bar Q^-)\cup C^{4+\gamma}(\bar Q^+).
\]
Moreover, using  $\vert \sqrt{g}-1 \vert \le \vert g-1 \vert$ and
$
 \vert g-1 \vert \leq Ct,
$
 we conclude that,
\[
 \vert {\cal{L}}z_R  (x,t) \vert \leq C  E_\gamma (x,t) \leq C  e^{-\frac{\vert x-d\vert}{2\sqrt {\ve t}}}, \quad x\neq d.
\]
In the stretched variable
\[
\zeta = \frac{x-d}{\sqrt{\ve}},
\]
we have that
\begin{equation}\label{bds-zr}
 \Bigl \vert \frac{\partial ^{i+j} ({\cal{L}}z_R  (\zeta ,t) )}{\partial \zeta ^i \partial t ^j} \Bigr \vert \leq C  e^{-\frac{\gamma g \zeta ^2}{4 t}},  \quad 0 \leq i+2j \leq 2.
\end{equation}

\end{document}